\apptocmd{\thebibliography}{\setlength{\itemsep}{-3pt}}{}{}
\titleformat{\subsection}[runin]{\normalfont}{\thesubsection.}{0.5em}{\textbf}[]
\newtheorem{theorem}{Theorem}[section]
\newtheorem{lemma}[theorem]{Lemma}
\newtheorem{corollary}[theorem]{Corollary}
\newtheorem{proposition}[theorem]{Proposition}
\newtheorem{remark}[theorem]{Remark}
\newtheorem{example}[theorem]{Example}
\newtheorem{definition}[theorem]{Definition}
\def\depth{\textup{depth}}
\def\pd{\textup{proj}\phantom{.}\!\textup{dim}}
\def\indeg{\textup{indeg}}
\def\supp{\textup{supp}}
\newcommand\blfootnote[1]{%
	\begingroup
	\renewcommand\thefootnote{}\footnote{#1}%
	\addtocounter{footnote}{-1}%
	\endgroup
}
\begin{document}

\title{\bf\normalsize\MakeUppercase{Behaviour of the normalized depth function}}	
\author{\small ANTONINO FICARRA, J\"URGEN HERZOG, TAKAYUKI HIBI}	
\date{}
\maketitle

\begin{center}
	\begin{minipage}{0.85\linewidth}
		{\small\textsc{Abstract.} Let $I\subset S=K[x_1,\dots,x_n]$ be a squarefree monomial ideal, $K$ a field. The $k$th squarefree power $I^{[k]}$ of $I$ is the monomial ideal of $S$ generated by all squarefree monomials belonging to $I^k$. The biggest integer $k$ such that $I^{[k]}\ne(0)$ is called the monomial grade of $I$ and it is denoted by $\nu(I)$. Let $d_k$ be the minimum degree of the monomials belonging to $I^{[k]}$. Then, $\text{depth}(S/I^{[k]})\ge d_k-1$ for all $1\le k\le\nu(I)$. The normalized depth function of $I$ is defined as $g_{I}(k)=\text{depth}(S/I^{[k]})-(d_k-1)$, $1\le k\le\nu(I)$. It is expected that $g_I(k)$ is a non-increasing function for any $I$. In this article we study the behaviour of $g_{I}(k)$ under various operations on monomial ideals. Our main result characterizes all cochordal graphs $G$ such that for the edge ideal $I(G)$ of $G$ we have $g_{I(G)}(1)=1$. They are precisely all cochordal graphs $G$ whose complementary graph $G^c$ is connected and has a cut vertex. As a far-reaching application, for given integers $1\le s<m$ we construct a graph $G$ such that $\nu(I(G))=m$ and $g_{I(G)}(k)=0$ if and only if $k=s+1,\dots,m$. Finally, we show that any non-increasing function of non-negative integers is the normalized depth function of some squarefree monomial ideal.}
	\end{minipage}
\end{center}\normalsize
	\blfootnote{
		\hspace{-0,3cm} \emph{Keywords:} Normalized depth function, squarefree powers, matchings, edge ideals.
		
		\emph{2020 Mathematics Subject Classification:} 13C15, 05E40, 05C70.
	}

\maketitle

\section{Introduction}

The study of the algebraic properties of the powers of a homogeneous ideal $I$ of a polynomial ring $S=K[x_1,\dots,x_n]$, $K$ a field, is a classical topic in Commutative Algebra. Many of the known invariants of $I$ behave asymptotically well, that is, stabilize or show a regular behaviour for sufficiently high powers of $I$.
In the last two decades the study of the depth function $f_I(k)=\depth(S/I^k)$ of a homogeneous ideal $I$ has attracted a lot of interest. A classical result of Brodmann \cite{B79} assures that $f_I(k)$ is constant for $k\gg0$. On the other hand, the initial behaviour of $f_I(k)$ remained quite elusive for a long time. It was conjectured in \cite{HH2005} that for any bounded convergent function $\varphi:\mathbb{Z}_{\ge0}\rightarrow\mathbb{Z}_{\ge0}$ there exists a suitable homogeneous ideal $I$ such that $\varphi=f_I$. Many years later, this conjecture was settled in the affirmative by H.T. H\`a, H. Nguyen, N. Trung and T. Trung in \cite[Theorem 4.1]{HNTT2021}.

Recently, the study of the depth function of squarefree powers of squarefree monomial ideals was initiated in \cite{EHHM2022b}, see also \cite{BHZN18,CFL,EH2021,EHHM2022a,SASF2022,SASF2023}. Let $I\subset S$ be a squarefree monomial ideal and $G(I)$ be its unique minimal set of monomial generators. The \textit{$k$th squarefree power} of $I$, denoted by $I^{[k]}$, is the monomial ideal generated by the squarefree monomials of $I^k$. Thus $u_1u_2\cdots u_k$, $u_i\in G(I)$, $i=1,\dots,k$, belongs to $G(I^{[k]})$ if and only if $u_1,u_2,\dots,u_k$ is a regular sequence. Let $\nu(I)$ be the \textit{monomial grade} of $I$, \emph{i.e.}, the biggest length of a monomial regular sequence contained in $I$. Then $I^{[k]}$ is non-zero if and only if $k\le\nu(I)$.

Our motivation for studying such powers also comes from graph theory. Let $G$ be a finite simple graph on vertex set $[n]=\{1,\dots,n\}$, \emph{i.e.}, $G$ has no loops or multiple or directed edges. Furthermore, all the graphs we consider in this article do not have isolated vertices. The \textit{edge ideal} $I(G)$, associated to $G$, is the ideal of $S=K[x_1,\dots,x_n]$ generated by all squarefree monomials $x_ix_j$, $i\ne j$, such that $\{i,j\}\in E(G)$. A \textit{matching} $M$ of $G$ is a set of edges of $G$ such that no two distinct edges of $M$ have common vertices. If $|M|=k$, then $M$ is called a \textit{$k$-matching}. We denote by $\nu(G)$ the \textit{matching number} of $G$, that is the biggest size of a matching of $G$. Then, if $u_p=x_{i_p}x_{j_p}\in I(G)$, $p=1,\dots,k$, we have that $u_1u_2\cdots u_k\in G(I(G)^{[k]})$ if and only if $M=\big\{\{i_p,j_p\}:p=1,\dots,k\big\}$ is a $k$-matching of $G$. In particular, $\nu(I(G))=\nu(G)$.

Again, let $I\subset S$ be a squarefree monomial ideal. We always let $S$ to be the smallest polynomial ring that contains $G(I)$. Our main object of study is the \textit{normalized depth function} of $I$. For $1\le k\le\nu(I)$, we denote by $d_k=\indeg(I^{[k]})$ the \textit{initial degree} of $I^{[k]}$, \emph{i.e.}, the minimum degree of a monomial generator of $I^{[k]}$. Then, for all $k\ge0$ such that $I^{[k]}\ne(0)$, we have $\depth(S/I^{[k]})\ge d_k-1$ \cite[Proposition 1.1(b)]{EHHM2022b}. The normalized depth function of $I$ is defined as
$$
g_I(k)=\depth(S/I^{[k]})-(d_k-1),\ \ \ \ \ k=1,\dots,\nu(I).
$$
In contrast to the behaviour of the depth function of ordinary powers, a quite different situation is expected. Indeed, it was predicted in \cite{EHHM2022b} that the following is true:\bigskip
\\
\textbf{Conjecture.} \textit{For any squarefree monomial ideal, $g_I(k)$ is a non-increasing function.}\bigskip

At present, this conjecture is widely open. In this article, we investigate the behaviour of the normalized depth function under some general operations and for a large class of edge ideals.\medskip

Let us discuss now the outlines of the article.

In Section 2, we discuss the behaviour of the normalized depth function with respect to two standard operations on monomial ideals: products and sums. In Theorem \ref{Thm:gproduct}, we show that the normalized depth function is additive if we take products of monomial ideals $I_1\subset S_1$, $I_2\subset S_2$ of polynomial rings $S_1,S_2$ in disjoint sets of variables. That is $g_{I_1I_2}(k)=g_{I_1}(k)+g_{I_2}(k)$. Hence, in Corollary \ref{Cor:gadditive} we deduce that $g_{I_1I_2}$ is non-increasing if both $g_{I_1}$, $g_{I_2}$ are non-increasing. Then, we analyze the relationship between $g_I$ and $g_{(I,x)}$ where $x$ is a variable not dividing any monomial generator of $I$. Under mild hypotheses, the precise relationship is obtained in Proposition \ref{Prop:(I,x)sqfrPowers}. Its proof depends on the concept of \textit{Betti splitting} \cite{FHT2009} and a criterion of Bolognini (Proposition \ref{Prop:Bol}). Next, if $g_I$ is non-increasing, then $g_{(I,x)}$ is non-increasing, too (Corollary \ref{Cor:gIgJincreasing}).\smallskip

Section 3 contains our main two results. We focus our attention on the class of cochordal graphs. Recall that a graph $G$ is called \textit{cochordal} if its \textit{complementary graph} $G^c$ is \textit{chordal}, that is, $G^c$ does not contain induced cycles of length greater than three. In 1990 \cite{F90}, Fr\"oberg proved that $I(G)$ has a linear resolution if and only if $G$ is cochordal. This result has been further refined by Herzog, Hibi and Zheng \cite{HHZ2004} by showing that $G$ is cochordal if and only if all ordinary powers $I(G)^k$ have linear quotients. It was noted in \cite[Corollary 3.2]{EHHM2022b} that all the squarefree powers $I(G)^{[k]}$ have linear quotients, $k=1,\dots,\nu(G)$, if $G$ is cochordal. Furthermore, in \cite[Corollary 2.2]{EHHM2022b} all graphs $G$ such that $g_{I(G)}(1)=0$ have been classified. In Theorem \ref{Thm:gIchordal} we classify all cochordal graphs $G$ such that $g_{I(G)}(1)=1$. They are precisely all cochordal graphs $G$ such that $G^c$ is connected with a cut vertex. Moreover, if $G$ is such a graph, the normalized depth function is $g_{I(G)}(1)=1$ and $g_{I(G)}(k)=0$ for $k=2,\dots,\nu(G)$. The proof of this theorem relies upon Hochster's formula and a criterion obtained in \cite{EHHM2022b} (Proposition \ref{Criterion:g=0}). An indispensable tool is the notion of \textit{special $k$-matching} (Definition \ref{Def:specialk-matc}), see also Example \ref{Ex:specialk-matc}. A far-reaching application of Theorem \ref{Thm:gIchordal} is given in  Theorem \ref{Thm:s<m}. Note that the conjecture on the non-increasingness of $g_I$ would also imply that if $g_{I}(k)=0$ then $g_{I}(k+1)=0$, too, for any $k<\nu(I)$. Hence, it is natural to consider the following problem, which was raised in \cite{EHHM2022b}.
\bigskip\\
\textbf{Problem.} \textit{For given integers $1\le s<m$, find a finite simple graph $G$ with $\nu(G)=m$ such that $g_{I(G)}(k)>0$ for $k=1,\dots,s$ and $g_{I(G)}(k)=0$ for $k=s+1,\dots,m$.}
\bigskip\\
In Theorem \ref{Thm:s<m} we solve the above problem. A variation of Proposition \ref{Prop:(I,x)sqfrPowers} (Lemma \ref{Lemma:adjoiningedge}) is required for its proof. In particular, for the graph $G$ we construct to solve the above problem, we have $g_{I(G)}(k)=s-(k-1)$ for $k=1,\dots,s$ and $g_{I(G)}(k)=0$ for $k=s+1,\dots,m$, $m=\nu(I(G))$. 

In Section 4, we show that any non-increasing sequence of non-negative integers can be the normalized depth function of some squarefree monomial ideal (Theorem \ref{Thm:AnyNon-Increasing}). On the other hand, it is an open question if any non-increasing function can be the normalized depth function of an edge ideal.\medskip

We gratefully acknowledge the use of \textit{Macaulay2} \cite{GDS} and CoCoA \cite{CoCoA} which have been invaluable tools to make our experiments.

\section{The behaviour of the normalized depth function with respect to some operations on monomial ideals}

In this section we analyze the behaviour of the normalized depth function with respect to some operations on monomial ideals.

Our first result shows that the normalized depth function is additive with respect to the product of monomial ideals in disjoint sets of variables.
\begin{theorem}\label{Thm:gproduct}
	Let $S_1=K[x_1,\dots,x_n]$ and $S_2=K[y_1,\dots,y_m]$ be polynomial rings in disjoint sets of variables and let $S=K[x_1,\dots,x_n,y_1,\dots,y_m]$. Let $I_1\subset S_1$, $I_2\subset S_2$ be squarefree monomial ideals. Then, $\nu(I_1I_2)=\min\{\nu(I_1),\nu(I_2)\}$ and for all $1\le k\le\nu(I_1I_2)$,
	$$
	g_{I_1I_2}(k)=g_{I_1}(k)+g_{I_2}(k).
	$$
\end{theorem}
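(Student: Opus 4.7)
The strategy is to reduce the claim to a depth formula for products of squarefree monomial ideals in disjoint sets of variables, which is then combined with an explicit description of the squarefree powers and initial degrees of the product.

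First I would establish the identity $(I_1I_2)^{[k]} = I_1^{[k]}\,I_2^{[k]}$ for every $k\le\min\{\nu(I_1),\nu(I_2)\}$. A typical squarefree generator of $(I_1I_2)^{[k]}$ is a squarefree product $(u_1v_1)(u_2v_2)\cdots(u_kv_k)$ with $u_i\in G(I_1)$ and $v_i\in G(I_2)$; since the variable sets are disjoint, this equals $(u_1\cdots u_k)(v_1\cdots v_k)$, and it is squarefree if and only if $u_1\cdots u_k\in I_1^{[k]}$ and $v_1\cdots v_k\in I_2^{[k]}$. This simultaneously yields $\nu(I_1I_2)=\min\{\nu(I_1),\nu(I_2)\}$ (combine a $k$-matching from each factor, and conversely project) and $d_k(I_1I_2)=d_k(I_1)+d_k(I_2)$, since the minimum degree of a generator of a product of monomial ideals in disjoint variables is the sum of the minimum degrees.

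Next I would prove the depth identity: for any proper nonzero squarefree monomial ideals $J\subset S_1$ and $K\subset S_2$ in disjoint variables,
$$
\depth_S(S/JK) \,=\, \depth_{S_1}(S_1/J) + \depth_{S_2}(S_2/K) + 1.
$$
Because the variable sets are disjoint one has $J\cap K=JK$ in $S$, which yields the Mayer--Vietoris short exact sequence
$$
0 \to S/JK \to S/J \oplus S/K \to S/(J+K) \to 0.
$$
Since adjoining free variables increases depth by the number of variables added, and $S/(J+K)\cong (S_1/J)\otimes_K(S_2/K)$ has depth $\depth_{S_1}(S_1/J)+\depth_{S_2}(S_2/K)$, one finds that $\depth_S(S/J\oplus S/K) = \min\{\depth_{S_1}(S_1/J)+\dim S_2,\, \depth_{S_2}(S_2/K)+\dim S_1\}$, which strictly exceeds $\depth_S(S/(J+K))$ whenever $J,K$ are proper and nonzero. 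The depth lemma applied to the Mayer--Vietoris sequence in both directions then forces $\depth_S(S/JK)=\depth_S(S/(J+K))+1$.

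Applying this identity with $J=I_1^{[k]}$ and $K=I_2^{[k]}$, and combining with $d_k(I_1I_2)=d_k(I_1)+d_k(I_2)$, one obtains
$$
g_{I_1I_2}(k) \,=\, \bigl[\depth_{S_1}(S_1/I_1^{[k]})-(d_k(I_1)-1)\bigr] + \bigl[\depth_{S_2}(S_2/I_2^{[k]})-(d_k(I_2)-1)\bigr] \,=\, g_{I_1}(k)+g_{I_2}(k).
$$
The main obstacle is the sharpness argument in the depth identity: the depth lemma alone gives only inequalities, so one must exploit the strict inequality $\depth_S(S/J\oplus S/K)>\depth_S(S/(J+K))$ (guaranteed by $J,K$ being proper and nonzero, which in our application is precisely the condition $k\le\nu(I_1I_2)$) to collapse both bounds into the desired equality.
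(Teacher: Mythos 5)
Your argument is correct, and its overall skeleton matches the paper's: you establish $(I_1I_2)^{[k]}=I_1^{[k]}I_2^{[k]}$, the additivity of the initial degree, and a depth formula for products of ideals in disjoint variables, then assemble. The one genuine difference is how you obtain that depth formula. The paper simply quotes \cite[Corollary 3.2]{HRR22}, which gives $\pd(S/(I_1I_2)^{[k]})=\pd(S_1/I_1^{[k]})+\pd(S_2/I_2^{[k]})-1$, and converts it to depth via Auslander--Buchsbaum. You instead prove the equivalent statement $\depth_S(S/JK)=\depth_{S_1}(S_1/J)+\depth_{S_2}(S_2/K)+1$ from scratch, using $J\cap K=JK$ (valid since the generators are coprime monomials), the Mayer--Vietoris sequence $0\to S/(J\cap K)\to S/J\oplus S/K\to S/(J+K)\to 0$, and the two-sided depth-lemma squeeze; the strict inequality $\depth_S(S/J\oplus S/K)>\depth_S(S/(J+K))$ that you need does follow from $J,K$ being nonzero and proper, since then $\depth_{S_i}$ of each quotient is strictly less than the ambient dimension, and both directions of the depth lemma then pin $\depth_S(S/JK)$ to exactly $\depth_S(S/(J+K))+1$. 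This buys you a self-contained proof at the cost of relying on one further standard fact that you assert rather than prove, namely $\depth\bigl((S_1/J)\otimes_K(S_2/K)\bigr)=\depth_{S_1}(S_1/J)+\depth_{S_2}(S_2/K)$; that claim is no harder than what the paper cites (it follows, e.g., by tensoring minimal free resolutions and applying Auslander--Buchsbaum), but you should either justify it in a line or attach a reference so that the argument does not silently trade one external input for another.
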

\begin{proof}
	Obviously, $\nu(I_1I_2)=\min\{\nu(I_1),\nu(I_2)\}$. Let $1\le k\le\nu(I_1I_2)$. Note that $(I_1I_2)^{[k]}=I_1^{[k]}I_2^{[k]}$, and moreover $I_1^{[k]}\subset S_1$ and $I_2^{[k]}\subset S_2$. By \cite[Corollary 3.2]{HRR22},
	$$
	\pd(S/(I_1I_2)^{[k]})=\pd(S_1/I_1^{[k]})+\pd(S_2/I_2^{[k]})-1.
	$$
	Then
	$$
	n+m-\pd(S/(I_1I_2)^{[k]})=n-\pd(S_1/I_1^{[k]})+m-\pd(S_2/I_2^{[k]})+1.
	$$
	Let $d_k=\indeg(I_1^{[k]})$, $\delta_k=\indeg(I_2^{[k]})$. Then $\indeg((I_1I_2)^{[k]})=d_k+\delta_k$. Therefore, by the Auslander--Buchsbaum formula
	$$
	\depth(S/(I_1I_2)^{[k]})-(d_k+\delta_k-1)=\depth(S_1/I_1^{[k]})+\depth(S_2/I_2^{[k]})+1-(d_k+\delta_k-1),
	$$
	and hence $g_{I_1I_2}(k)=g_{I_1}(k)+g_{I_2}(k)$.
\end{proof}
\begin{corollary}\label{Cor:gadditive}
	Under the assumptions of the previous theorem, suppose $g_{I_1}$ and $g_{I_2}$ are non-increasing functions, then $g_{I_1I_2}$ is a non-increasing function too.
\end{corollary}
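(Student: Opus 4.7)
The plan is to deduce the corollary immediately from Theorem \ref{Thm:gproduct}, with only a small domain check to make sure both summands are defined at the relevant arguments. By the theorem, for every $k$ in the range $1 \le k \le \nu(I_1 I_2)$ we have the pointwise identity
$$
g_{I_1 I_2}(k) \;=\; g_{I_1}(k) + g_{I_2}(k).
$$

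The first step is to observe that since $\nu(I_1 I_2) = \min\{\nu(I_1),\nu(I_2)\}$, we have $\nu(I_1 I_2) \le \nu(I_i)$ for $i=1,2$. Consequently, whenever $1 \le k < \nu(I_1 I_2)$, the values $g_{I_1}(k)$, $g_{I_1}(k+1)$, $g_{I_2}(k)$, $g_{I_2}(k+1)$ all lie in the respective natural domains of $g_{I_1}$ and $g_{I_2}$, so the non-increasing hypotheses apply.

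The second step is termwise comparison: for $1 \le k < \nu(I_1 I_2)$,
$$
g_{I_1 I_2}(k+1) \;=\; g_{I_1}(k+1) + g_{I_2}(k+1) \;\le\; g_{I_1}(k) + g_{I_2}(k) \;=\; g_{I_1 I_2}(k),
$$
where the inequality uses that each of $g_{I_1}$, $g_{I_2}$ is non-increasing on its domain. This shows $g_{I_1 I_2}$ is non-increasing, which is the desired conclusion.

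There is essentially no obstacle here: the entire content of the corollary is packaged into the additivity formula proved in Theorem \ref{Thm:gproduct}, and the only point worth mentioning in the write-up is the domain inclusion that legitimizes applying the monotonicity of $g_{I_1}$ and $g_{I_2}$ to the index $k+1 \le \nu(I_1 I_2)$.
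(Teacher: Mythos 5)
Your argument is correct and is precisely the immediate deduction the paper has in mind: the paper states this corollary without proof, treating it as a direct consequence of the additivity formula $g_{I_1I_2}(k)=g_{I_1}(k)+g_{I_2}(k)$ from Theorem \ref{Thm:gproduct}. Your termwise comparison and the domain check via $\nu(I_1I_2)=\min\{\nu(I_1),\nu(I_2)\}$ are exactly right.
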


Let $I\subset S'=K[x_1,\dots,x_n]$ be a squarefree monomial ideal. Now we examine the relationship between $g_I$ and $g_J$, where $J=(I,x)\subset S=S'[x]=K[x_1,\dots,x_n,x]$.

For the proof of the next result we recall the concept of \textit{Betti splitting} \cite{FHT2009}.
Let $I$, $I_1$, $I_2$ be monomial ideals of $S$ such that $G(I)$ is the disjoint union of $G(I_1)$ and $G(I_2)$. We say that $I=I_1+I_2$ is a \textit{Betti splitting} if
\begin{equation}\label{eq:BettiNumberBS}
	\beta_{i,j}(I)=\beta_{i,j}(I_1)+\beta_{i,j}(I_2)+\beta_{i-1,j}(I_1\cap I_2) \ \ \ \textup{for all}\ i,j.
\end{equation}
In particular, by \cite[Corollary 2.2(a)]{FHT2009},
\begin{equation}\label{eq:pdBS}
	\pd(I)=\max\big\{\pd(I_1),\pd(I_2),\pd(I_1\cap I_2)+1 \big\}.
\end{equation}

The following criterion is due to Bolognini.
\begin{proposition}\label{Prop:Bol}
	\textup{\cite[Theorem 3.3]{DB}} Let $I$, $I_1$, $I_2$ be monomial ideals of $S$ such that $G(I)$ is the disjoint union of $G(I_1)$ and $G(I_2)$. Suppose that $I_1$ and $I_2$ are componentwise linear. Then $I=I_1+I_2$ is a Betti splitting.
\end{proposition}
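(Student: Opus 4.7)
The plan is to combine the Mayer--Vietoris short exact sequence
\begin{equation*}
0\to I_1\cap I_2\to I_1\oplus I_2\to I\to 0
\end{equation*}
with the Francisco--H\`a--Van Tuyl characterization recalled in \cite{FHT2009}: tensoring with $K$ over $S$ produces a long exact sequence of multigraded Tor-modules, and identity \eqref{eq:BettiNumberBS} is equivalent to every connecting map $\textup{Tor}_i^S(I,K)\to\textup{Tor}_{i-1}^S(I_1\cap I_2,K)$ being zero, that is, to the injectivity in every multidegree of the natural map
\begin{equation*}
\Phi_i\colon \textup{Tor}_i^S(I_1\cap I_2,K)\longrightarrow \textup{Tor}_i^S(I_1,K)\oplus\textup{Tor}_i^S(I_2,K)
\end{equation*}
induced by the two inclusions. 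The whole problem thus reduces to showing that componentwise linearity of $I_1$ and $I_2$ forces each $\Phi_i$ to be injective.

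The first step exploits the hypothesis $G(I)=G(I_1)\sqcup G(I_2)$. No element of $G(I_1)$ divides one of $G(I_2)$ or vice versa, so $\deg\textup{lcm}(u,v)>\max\{\deg u,\deg v\}$ for every $u\in G(I_1)$ and $v\in G(I_2)$. Since each minimal generator of $I_1\cap I_2$ is such an lcm, this yields a strict gap $\indeg(I_1\cap I_2)>\max\{\indeg(I_1),\indeg(I_2)\}$ and, more importantly, places the generators of $I_1\cap I_2$ in degrees strictly larger than those of $I_1$ and $I_2$. I would then invoke the Herzog--Hibi characterization of componentwise linearity, namely $\beta_{i,i+d}^S(I_\ell)=\beta_{i,i+d}^S((I_\ell)_{\langle d\rangle})$ for every $d$, which reduces the Tor-modules of $I_1$ and $I_2$ in each strand to those of an ideal with a $d$-linear resolution. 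In parallel I would verify that $I_1\cap I_2$ is also componentwise linear, a property that can be derived from the disjointness of $G(I_1),G(I_2)$ and the componentwise linearity of the summands.

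With all three ideals componentwise linear and the degree gap in hand, the injectivity of $\Phi_i$ is checked strand by strand: a non-zero cycle in $\textup{Tor}_i^S(I_1\cap I_2,K)_{i+d}$ is carried by lcms $\textup{lcm}(u,v)$ with $u\in G(I_1)$, $v\in G(I_2)$, and its simultaneous vanishing in both summands would produce a hidden syzygy at a bidegree incompatible with the componentwise linear structure of $I_1$ or $I_2$. The main obstacle is precisely this last verification: one must rule out cancellations across the two factors of $\Phi_i$ in \emph{every} strand, not only in the bottom one. The lowest strand is immediate from the strict degree gap, whereas the higher strands require the full strength of the Herzog--Hibi description of minimal free resolutions of componentwise linear ideals, reducing each strand to the linear-resolution case where the injectivity can be read off from an Eliahou--Kervaire-type decomposition of cycles. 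The remaining ingredients---the Mayer--Vietoris sequence, the reformulation via $\Phi_i$, and the degree-gap observation---are largely mechanical once the componentwise linearity of $I_1\cap I_2$ is in place.
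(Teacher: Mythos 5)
The homological reformulation at the heart of your plan is inverted, and this is fatal to the strategy. From the short exact sequence $0\to I_1\cap I_2\to I_1\oplus I_2\to I\to 0$ one reads off, for each homological degree $i$,
\begin{equation*}
\beta_i(I)=\beta_i(I_1)+\beta_i(I_2)+\beta_{i-1}(I_1\cap I_2)-\dim_K\operatorname{im}\Phi_i-\dim_K\operatorname{im}\Phi_{i-1},
\end{equation*}
so the Betti splitting identity \eqref{eq:BettiNumberBS} holds if and only if every $\Phi_i\colon \textup{Tor}_i^S(I_1\cap I_2,K)\to \textup{Tor}_i^S(I_1,K)\oplus\textup{Tor}_i^S(I_2,K)$ is the \emph{zero} map (equivalently, every connecting homomorphism $\textup{Tor}_i^S(I,K)\to\textup{Tor}_{i-1}^S(I_1\cap I_2,K)$ is \emph{surjective}); this is the criterion of Francisco--H\`a--Van Tuyl \cite[Proposition 2.1]{FHT2009}. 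Your condition --- vanishing of the connecting maps, equivalently injectivity of the $\Phi_i$ --- would instead force $\beta_{i,j}(I)=\beta_{i,j}(I_1)+\beta_{i,j}(I_2)-\beta_{i,j}(I_1\cap I_2)$, which is a different (and here wrong) formula: note the minus sign and the absence of the homological shift $i-1$. Consequently the entire second half of your argument, which labours to establish injectivity of $\Phi_i$ strand by strand, is aimed at the wrong target; even if carried out it would not prove the proposition.

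The correct line of attack keeps your first observation --- disjointness of $G(I_1)$ and $G(I_2)$ gives $I_1\cap I_2\subseteq \mathfrak{m}I_1$ and $I_1\cap I_2\subseteq \mathfrak{m}I_2$, since each minimal generator of $I_1\cap I_2$ is an lcm of strictly larger degree --- but then uses componentwise linearity of the \emph{targets} $I_1,I_2$ to show that the inclusions $I_1\cap I_2\hookrightarrow I_\ell$ induce the zero map on all Tor modules. This is exactly Bolognini's technical input (a vanishing lemma for maps into componentwise linear modules whose image lies in $\mathfrak{m}$ times the relevant components); no componentwise linearity of $I_1\cap I_2$ is needed, and your proposed reduction to an Eliahou--Kervaire analysis of cycles does not arise. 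Note also that the paper itself offers no proof: the proposition is quoted verbatim from \cite[Theorem 3.3]{DB}, so the only basis for comparison is the published argument there, which proceeds as just described.
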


\begin{proposition}\label{Prop:(I,x)sqfrPowers}
	Let $S'=K[x_1,\dots,x_n]$ and $S=S'[x]=K[x_1,\dots,x_n,x]$ be polynomial rings and let $I\subset S'$ be a squarefree monomial ideal all of whose squarefree powers are componentwise linear. Let $J=(I,x)$ and $d_k=\indeg(I^{[k]})$ for $1\le k\le\nu(I)$. Furthermore, set $g_I(0)=g_I(\nu(I)+1)=+\infty$ and $d_0=0$. Then $\nu(J)=\nu(I)+1$ and for all $1\le k\le\nu(J)$,
	\begin{equation}\label{eq:gJgImin}
		g_J(k)=\min\{g_I(k)+d_k-d_{k-1}-1,g_I(k-1)\}.
	\end{equation}
\end{proposition}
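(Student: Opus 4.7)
The plan is to realise $J^{[k]}$ as a Betti splitting and read off its projective dimension, then translate to depths via Auslander--Buchsbaum.

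First I would observe that because $x$ does not divide any generator of $I$, a squarefree product of elements of $G(J)=G(I)\cup\{x\}$ either avoids $x$ (landing in $G(I^{[k]})$) or involves $x$ as a simple factor (giving $x$ times a squarefree product of $k-1$ generators of $I$). Hence, with the convention $I^{[0]}=S'$,
$$
J^{[k]}\,=\,I^{[k]}\,+\,xI^{[k-1]},
$$
with the two generating sets on the right disjoint, and in particular $\nu(J)=\nu(I)+1$.

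Next I would verify the hypotheses of Bolognini's criterion. The first summand is componentwise linear by assumption. For the second, $xI^{[k-1]}\cong I^{[k-1]}S(-1)$ as an $S$-module, and its strands satisfy $(xI^{[k-1]})_{\langle d\rangle}=x\,(I^{[k-1]})_{\langle d-1\rangle}$, so they inherit linear resolutions from those of $I^{[k-1]}$. A short check using $I^{[k]}\subset I^{[k-1]}$ shows that any monomial in $I^{[k]}$ divisible by $x$ automatically lies in $xI^{[k]}$, whence $I^{[k]}\cap xI^{[k-1]}=xI^{[k]}$. Proposition \ref{Prop:Bol} then applies, and \eqref{eq:pdBS} gives
$$
\pd_S(J^{[k]})\,=\,\max\bigl\{\pd_S(I^{[k]}),\,\pd_S(xI^{[k-1]}),\,\pd_S(xI^{[k]})+1\bigr\}.
$$
Because $S$ is flat over $S'$ and multiplication by $x$ only shifts degrees, each term on the right equals the corresponding projective dimension over $S'$. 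Adding $1$ to pass to quotients and applying Auslander--Buchsbaum in both $S$ and $S'$ yields
$$
\depth(S/J^{[k]})\,=\,\min\bigl\{\depth(S'/I^{[k]}),\;\depth(S'/I^{[k-1]})+1\bigr\}.
$$

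For the initial degree, any minimum-degree generator $u_1\cdots u_k\in G(I^{[k]})$ of degree $d_k$ has truncation $u_1\cdots u_{k-1}\in I^{[k-1]}$, whence $d_{k-1}\le d_k-1$; therefore $\indeg(J^{[k]})=\min\{d_k,d_{k-1}+1\}=d_{k-1}+1$. Substituting $\depth(S'/I^{[k]})=g_I(k)+d_k-1$ and $\depth(S'/I^{[k-1]})+1=g_I(k-1)+d_{k-1}$ into the depth formula above and subtracting $d_{k-1}$ yields exactly \eqref{eq:gJgImin}. The boundary cases $k=1$ and $k=\nu(I)+1$ reduce to the conventions $I^{[0]}=S'$, $d_0=0$, $g_I(0)=g_I(\nu(I)+1)=+\infty$: the \emph{missing} term becomes infinite and drops out of the minimum, and one may double-check directly that $S/J\cong S'/I$ when $k=1$ and that $S/(xI^{[\nu(I)]})$ has the same projective dimension as $S'/I^{[\nu(I)]}$ when $k=\nu(I)+1$.

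The main obstacle I expect is keeping careful track of the interplay between $S$ and $S'$: in particular checking that $xI^{[k-1]}$ is componentwise linear (even at the boundary $k=1$, where it degenerates to the principal ideal $(x)$), and that the flat extension together with the degree shift by $x$ preserves projective dimensions exactly as needed for the three terms appearing in the Betti splitting formula.
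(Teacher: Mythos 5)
Your proposal is correct and follows essentially the same route as the paper: decompose $J^{[k]}=I^{[k]}+xI^{[k-1]}$, invoke Bolognini's criterion to get a Betti splitting, compute $I^{[k]}\cap xI^{[k-1]}=xI^{[k]}$, and pass through the projective-dimension formula and Auslander--Buchsbaum, with the boundary cases $k=1$ and $k=\nu(J)$ checked directly. The only (welcome) addition is your explicit truncation argument for $d_{k-1}\le d_k-1$, which the paper merely asserts.
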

\begin{proof}
	Firstly we verify our formula in the cases $k=1$ and $k=\nu(J)$.\smallskip
	
	When $k=1$, then $J^{[1]}=J=I+(x)$ and $\depth(S/J)=\depth(S'/I)$. Since $\indeg(J)=\min\{d_1,1\}=1$, we get that
	$$
	g_J(1)=\depth(S/J)=\depth(S'/I)=\depth(S'/I)-(d_1-1)+(d_1-1)=g_I(1)+(d_1-1).
	$$
	This agrees with formula (\ref{eq:gJgImin}), since $d_0=0$ and $g_I(0)=+\infty$.\smallskip
	
	When $k=\nu(J)$, then $J^{[\nu(J)]}=xI^{[\nu(I)]}$. In this case, $\indeg(J^{[\nu(J)]})=\indeg(I^{\nu(I)})+1=d_{\nu(I)}+1$  and $\depth(S/J^{[\nu(J)]})=\depth(S'/I^{[\nu(I)]})+1$. Hence
	$$
	g_J(\nu(J))=\depth(S/J^{[\nu(J)]})-d_{\nu(I)}=\depth(S'/I^{[\nu(I)]})+1-d_{\nu(I)}=g_I(\nu(I)),
	$$
	and since $g_I(\nu(I)+1)=+\infty$, this agrees with (\ref{eq:gJgImin}).\smallskip
	
	Now let $1<k<\nu(J)$. Note that $J^{[k]}=I^{[k]}+xI^{[k-1]}$. By our hypothesis both ideals $I^{[k]}$ and $I^{[k-1]}$ are componentwise linear. Thus $xI^{[k-1]}$ is componentwise linear too, and by Proposition \ref{Prop:Bol}, $J^{[k]}=I^{[k]}+xI^{[k-1]}$ is a Betti splitting. Hence by equation (\ref{eq:pdBS})
	$$
	\pd(J^{[k]})=\max\big\{\pd(I^{[k]}),\pd(xI^{[k-1]}),\pd(I^{[k]}\cap xI^{[k-1]})+1 \big\}.
	$$
	Note that $I^{[k]}\subset I^{[k-1]}$ and since $x$ does not divide any of the minimal generators of $I$, we obtain that $I^{[k]}\cap xI^{[k-1]}=xI^{[k]}$. Since $\pd(S/xI^{[k]})=\pd(S'/I^{[k]})$, we have
	$$
	\pd(S/J^{[k]})=\max\big\{\pd(S'/I^{[k]})+1,\pd(S'/I^{[k-1]})\big\}.
	$$
	Applying the Auslander-Buchsbaum formula we get
	$$
	\depth(S/J^{[k]})=\min\big\{\depth(S'/I^{[k]}),\depth(S'/I^{[k-1]})+1\big\}.
	$$
	Note that
	$$
	\indeg(J^{[k]})=\min\{\indeg(I^{[k]}),\indeg(I^{[k-1]})+1\}=\min\{d_k,d_{k-1}+1\}=d_{k-1}+1
	$$
	because $d_k>d_{k-1}$. Hence
	\begin{align*}
		g_J(k)\ &=\ \depth(S/J^{[k]})-d_{k-1}\\
		&=\ \min\{\depth(S'/I^{[k]})-d_{k-1},\depth(S'/I^{[k-1]})-d_{k-1}+1\}\\
		&=\ \min\{\depth(S'/I^{[k]})-(d_k-1)+ d_k-d_{k-1}-1,\depth(S'/I^{[k-1]})-(d_{k-1}-1)\}\\
		&=\ \min\{g_I(k)+d_k-d_{k-1}-1,g_I(k-1)\},
	\end{align*}
	as desired.
\end{proof}
\begin{corollary}\label{Cor:gIgJincreasing}
	With the assumptions and notation of the previous proposition, it follows that $g_J$ is a non-increasing function if $g_I$ is non-increasing.
\end{corollary}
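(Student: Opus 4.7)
The plan is to read monotonicity directly off the recursive formula of Proposition \ref{Prop:(I,x)sqfrPowers}. For each $k$ with $1 \le k \le \nu(J) - 1 = \nu(I)$, I want to establish $g_J(k) \ge g_J(k+1)$, where
\[
g_J(k) = \min\{g_I(k) + d_k - d_{k-1} - 1,\ g_I(k-1)\}, \quad g_J(k+1) = \min\{g_I(k+1) + d_{k+1} - d_k - 1,\ g_I(k)\}.
\]
The crucial observation I would exploit is that $g_I(k)$ appears as one of the arguments of the min defining $g_J(k+1)$, so trivially $g_J(k+1) \le g_I(k)$. It therefore suffices to show that each of the two arguments of the min defining $g_J(k)$ is at least $g_I(k)$.

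For the first argument, I would invoke the fact that the sequence of initial degrees $(d_k)$ is strictly increasing on $\{0,1,\dots,\nu(I)\}$ (with the convention $d_0=0$). This was already used inside the proof of Proposition \ref{Prop:(I,x)sqfrPowers} and is immediate: if $u_1\cdots u_k \in G(I^{[k]})$ realizes the degree $d_k$, then $u_1\cdots u_{k-1}\in I^{[k-1]}$ has degree $d_k-\deg u_k\le d_k-1$, giving $d_{k-1}\le d_k-1$. Hence $d_k-d_{k-1}-1\ge 0$, and so $g_I(k)+d_k-d_{k-1}-1\ge g_I(k)$. For the second argument, the hypothesis that $g_I$ is non-increasing yields $g_I(k-1)\ge g_I(k)$ directly. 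Chaining these bounds gives $g_J(k)\ge g_I(k)\ge g_J(k+1)$, which is exactly what was required.

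I do not foresee any real obstacle: the argument is formal manipulation of the minimum in the formula. The boundary cases $k=1$ and $k=\nu(J)-1$ are absorbed automatically by the conventions $g_I(0)=g_I(\nu(I)+1)=+\infty$ that are already built into the statement of Proposition \ref{Prop:(I,x)sqfrPowers}: in each case one of the $+\infty$ terms drops out of the relevant min while the other inequality in the chain remains valid. Thus, once Proposition \ref{Prop:(I,x)sqfrPowers} is granted, the corollary follows from nothing beyond the strict monotonicity of $(d_k)$ and the assumed monotonicity of $g_I$.
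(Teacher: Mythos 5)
Your proof is correct, and it uses the same two ingredients as the paper's proof (the recursion of Proposition \ref{Prop:(I,x)sqfrPowers}, the strict increase $d_k\ge d_{k-1}+1$, and the monotonicity of $g_I$), but it organizes them more efficiently. The paper proves $g_J(k+1)\le g_J(k)$ by splitting into cases according to which argument attains each minimum: a separate computation for $k=1$, four cases for $2\le k\le \nu(I)-1$, and two more for $k=\nu(I)$. You avoid all of this by interposing $g_I(k)$ as a pivot: $g_J(k+1)\le g_I(k)$ is immediate because $g_I(k)$ is literally one of the two arguments of the minimum defining $g_J(k+1)$, while $g_J(k)\ge g_I(k)$ follows because both arguments of the minimum defining $g_J(k)$ dominate $g_I(k)$ (the first by $d_k-d_{k-1}-1\ge 0$, the second by the hypothesis on $g_I$). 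Your handling of the boundary cases via the conventions $g_I(0)=g_I(\nu(I)+1)=+\infty$ and $d_0=0$ is also sound; note in particular that $k=1$ needs $d_1\ge 1$, which holds since $I$ is a proper nonzero ideal, exactly as the paper uses in its $k=1$ case. The net effect is a shorter and arguably more transparent proof of the same statement; what the paper's case analysis buys in exchange is an explicit identification of which term realizes $g_J(k)$ in each situation, which is not needed for the corollary itself.
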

\begin{proof}
	By hypothesis $g_I(k+1)-g_I(k)\le 0$ for all $k=1,\dots,\nu(I)-1$. We shall prove that $g_J(k+1)-g_J(k)\le0$ for all $k=1,\dots,\nu(I)$. Recall that $\nu(J)=\nu(I)+1$.\medskip
	
	Let $k=1$. From Proposition \ref{Prop:(I,x)sqfrPowers} we have $g_J(2)=\min\{g_I(2)+d_2-d_1-1,g_I(1)\}$ and $g_J(1)=g_I(1)+(d_1-1)$. If $g_J(2)=g_I(1)$, then $g_J(2)-g_J(1)=-(d_1-1)\le0$. Otherwise, if $g_J(2)=g_I(2)+d_2-d_1-1$, then $g_I(1)\ge g_I(2)+d_2-d_1-1$ and
	\begin{align*}
		g_J(2)-g_J(1)\ &=\ g_I(2)+d_2-d_1-1-(g_I(1)+(d_1-1))\\
		&=\ g_I(2)+d_2-2d_1-g_I(1)\\
		&\le\ g_I(2)+d_2-2d_1-(g_I(2)+d_2-d_1-1)\\
		&=\ 1-d_1\le 0,
	\end{align*}
	since $d_1\ge1$.\medskip
	
	Let $k\in\{2,\dots,\nu(I)-1\}$. From Proposition \ref{Prop:(I,x)sqfrPowers} we have
	\begin{align*}
		g_J(k+1)\ &=\ \min\{g_I(k+1)+d_{k+1}-d_{k}-1,g_I(k)\},\\
		g_J(k)\ &=\ \min\{g_I(k)+d_k-d_{k-1}-1,g_I(k-1)\}.
	\end{align*}
	We distinguish the four possible cases.
	\smallskip\\
	\textsc{Case 1.} Assume $g_J(k+1)=g_I(k+1)+d_{k+1}-d_{k}-1$ and $g_J(k)=g_I(k)+d_k-d_{k-1}-1$. Then $g_J(k+1)\le g_I(k)$ and
	\begin{align*}
		g_J(k+1)-g_J(k)\ &=\ g_{J}(k+1)-g_I(k)-(d_k-d_{k-1}-1)\\
		&\le\ g_{I}(k)-g_I(k)-(d_k-d_{k-1}-1)\\
		&=\ -(d_k-d_{k-1}-1)\le0
	\end{align*}
	because $d_k\ge d_{k-1}+1$.
	\smallskip\\
	\textsc{Case 2.} Assume $g_J(k+1)=g_I(k+1)+d_{k+1}-d_{k}-1$ and $g_J(k)=g_I(k-1)$. Then $g_J(k+1)\le g_I(k)$ and $g_J(k+1)-g_J(k)\le g_I(k)-g_I(k-1)\le0$ by our hypothesis.
	\smallskip\\
	\textsc{Case 3.} Assume $g_J(k+1)=g_I(k)$ and $g_J(k)=g_I(k)+d_k-d_{k-1}-1$. Then we have $g_J(k+1)-g_J(k)=-(d_k-d_{k-1}-1)\le0$ as observed before.
	\smallskip\\
	\textsc{Case 4.} Assume $g_J(k+1)=g_I(k)$ and $g_J(k)=g_I(k-1)$. Then $g_J(k+1)-g_J(k)=g_I(k)-g_I(k-1)\le0$ by our hypothesis.\medskip
	
	Finally, let $k=\nu(I)$, then $g_J(k+1)=g_J(\nu(J))=g_I(\nu(I))=g_I(k)$. Whereas, $g_J(k)=\min\{g_I(k)+d_k-d_{k-1}-1,g_I(k-1)\}$. If $g_J(k)=g_I(k)+d_k-d_{k-1}-1$, then $g_J(k+1)-g_J(k)=-(d_k-d_{k-1}-1)\le0$, as noted before. Otherwise, if $g_J(k)=g_{I}(k-1)$, then $g_J(k+1)-g_J(k)=g_I(k)-g_{I}(k-1)\le0$, because by our hypothesis $g_I$ is a non-increasing function. The proof is complete.
\end{proof}

\begin{remark}
	\rm In Proposition \ref{Prop:(I,x)sqfrPowers} we assumed that all squarefree powers of $I$ are componentwise linear in order to guarantee that for all $2\le k\le\nu(I)$, $J^{[k]}=I^{[k]}+xI^{[k-1]}$ is a Betti splitting. However this hypothesis is not required because it was proved in the meanwhile that $J^{[k]}=I^{[k]}+xI^{[k-1]}$ is always a Betti splitting \cite[Lemma 1.4]{CFL}.
\end{remark}\smallskip

\section{The normalized depth function of cochordal graphs}

In this section we examine the normalized depth function of cochordal graphs.\medskip 

Let $G$ be a graph with vertex set $V(G)=[n]=\{1,\dots,n\}$ and edge set $E(G)$. We always assume that $G$ has no isolated vertices. Let $S=K[x_1,\ldots,x_n]$  be the polynomial ring in $n$ variables over a field $K$. The \textit{edge ideal} $I(G)$, associated to $G$, is the ideal of $S$ generated by the set of all monomials $x_ix_j$, $i\ne j$, for which $i$ is \textit{adjacent} to $j$, \emph{i.e.}, $\{i,j\}\in E(G)$.\smallskip

The graph $G$ is called \textit{connected} if for any $i$ and $j$ there is a \textit{path} between $i$ and $j$, that is, a sequence of vertices $i_0,i_1,\dots,i_p$ such that $i_0=i$, $i_p=j$ and $\{i_k,i_{k+1}\}\in E(G)$ for $k=0,\dots,p-1$. If $G$ is not connected it is called \textit{disconnected}. For any graph $G$ there exist unique subgraphs $C_1,\dots,C_t$, called the \textit{connected components} of $G$, such that $V(G)=\bigcup_{i=1}^{t}V(C_i)$, $E(G)=\bigcup_{i=1}^{t}E(C_i)$ and each $C_i$ is a connected graph. A vertex $i$ of a connected graph $G$ is called a \textit{cut vertex} of $G$, if $G-\{i\}$ is disconnected. For a graph $G$, one defines the \textit{complementary graph} $G^c$ by setting $V(G^c)=V(G)$ and $\{i,j\}$ is an edge of $G^c$ if and only if $\{i,j\}$ is not an edge of $G$. Finally, $G$ is called \textit{cochordal} if $G^c$ is \textit{chordal}, \emph{i.e.}, $G^c$ has no induced cycles of length bigger than three.\medskip

Our main result is the following,
\begin{theorem}\label{Thm:gIchordal}
	Let $G$ be a cochordal graph with no isolated vertices. Then, the following conditions are equivalent:
	\begin{enumerate}[label=\textup{(\roman*)}]
		\item $G^c$ is connected with a cut vertex;
		\item $g_{I(G)}(1)=1$;
		\item $g_{I(G)}(1)=1$ and $g_{I(G)}(k)=0$ for $k\ge2$.
	\end{enumerate}
\end{theorem}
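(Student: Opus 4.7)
The plan is to establish the cyclic implications $\text{(iii)} \Rightarrow \text{(ii)} \Rightarrow \text{(i)} \Rightarrow \text{(iii)}$; the first is immediate, so the real content is in the other two.

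For $\text{(ii)} \Rightarrow \text{(i)}$, I would exploit the fact that Fr\"oberg's theorem gives $I(G)$ a $2$-linear resolution (since $G$ is cochordal), so $\beta_{i,j}(S/I(G))=0$ whenever $j\ne i+1$, $i\ge 1$. Applying Hochster's formula to the independence complex $\mathrm{Ind}(G)$, whose $1$-skeleton is $G^{c}$, yields
$$
\beta_{i,i+1}(S/I(G))=\sum_{|W|=i+1}\dim_{K}\tilde{H}_{0}(\mathrm{Ind}(G[W]);K),
$$
and $\tilde{H}_{0}(\mathrm{Ind}(G[W]))\ne 0$ exactly when $G^{c}[W]$ is disconnected. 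Hence, via the Auslander--Buchsbaum formula,
$$
g_{I(G)}(1)\;=\;\depth(S/I(G))-1\;=\;n-\max\{\,|W|:G^{c}[W]\text{ is disconnected}\,\}.
$$
Thus $g_{I(G)}(1)=1$ forces the maximum to equal $n-1$, which happens exactly when $G^{c}$ itself is connected (the max is not $n$) but some vertex removal disconnects it: precisely the cut-vertex condition in (i).

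For $\text{(i)}\Rightarrow\text{(iii)}$, the same Hochster computation run in reverse already extracts $g_{I(G)}(1)=1$ from the cut-vertex hypothesis. The harder half is proving $g_{I(G)}(k)=0$ for all $2\le k\le\nu(G)$. Here I would invoke Proposition \ref{Criterion:g=0}, which reduces the vanishing of $g_{I(G)}(k)$ to producing a special $k$-matching of $G$ in the sense of Definition \ref{Def:specialk-matc}. Let $v$ be a cut vertex of $G^{c}$ and let $A$, $B$ be two connected components of $G^{c}-v$; then in $G$ every vertex of $A$ is joined to every vertex of $B$, furnishing an ample supply of edges with which to build matchings that avoid $v$ while attaining the special property. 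Concretely, for each $k$ I would start from a $k$-matching packed with $A$-to-$B$ edges and, using $v$ as a pivot, adjust it into one that satisfies the special-matching requirements; Example \ref{Ex:specialk-matc} should serve as a guide to the correct ansatz.

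The main obstacle is precisely this last construction: one must guarantee, uniformly for every $2\le k\le\nu(G)$, that such a special $k$-matching exists even when $k$ approaches $\nu(G)$ and the vertex set of any maximum matching is largely forced. The structural input that must be exploited is the cochordality of $G$, accessed through a perfect elimination order of $G^{c}$ or via the linear quotients of $I(G)^{[k]}$ recorded in \cite[Corollary 3.2]{EHHM2022b}. This structural rigidity is what allows the cut vertex $v$ to remain available as a pivot and lets the required edge exchanges go through; without it, the special property can fail sporadically for intermediate values of $k$.
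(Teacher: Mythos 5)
Your treatment of (ii)$\Leftrightarrow$(i), and of the claim $g_{I(G)}(1)=1$ in (iii), is correct. You confine Hochster's formula to the linear strand using Fr\"oberg's theorem and obtain $g_{I(G)}(1)=n-\max\{|W|: G^c_W \text{ disconnected}\}$, which yields both directions at once; the paper instead evaluates $\beta_{n-2}(S/I(G))$ directly, kills the $\widetilde{H}^1$ contributions via the vanishing of higher cohomology of clique complexes of chordal graphs (Lemma \ref{Lem:Hi=0Chordal}), and imports connectedness of $G^c$ from the classification of graphs with $g_{I(G)}(1)=0$. The two routes are essentially equivalent and equally valid.

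The gap is in the case $k\ge 2$. You assert that Proposition \ref{Criterion:g=0} ``reduces the vanishing of $g_{I(G)}(k)$ to producing a special $k$-matching,'' but it does not: the criterion requires a dominating $k$-matching $M$ with $V(M)=\supp(u_i)$ for some generator $u_i$ in a linear quotient order of $I(G)^{[k]}$ such that, for \emph{every} $t\in V(G)-V(M)$, there is a $k$-matching $M'$ with $V(M')\subset V(M)\cup\{t\}$ and $V(M')=\supp(u_m)$ for some $m<i$. Mere existence of a special $k$-matching --- which is the easy part, settled by a short exchange argument (Lemma \ref{Lemma:specialk-matching}) that uses only the cut-vertex structure and no perfect elimination order --- does not produce the pairs $(M,M')$ with the crucial order constraint $m<i$. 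The missing idea is to choose, among all generators whose support is the vertex set of a special $k$-matching, the one $u_i$ of \emph{largest} index; writing $M=\{e_1,\dots,e_k\}$ with $e_1=\{1,j\}$ and $e_2=\{i_2,j_2\}$, $i_2\in V(C_1)$, $j_2\in V(C_2)$, the swap $M'=(M-\{e_2\})\cup\{\{t,j_2\}\}$ (resp.\ $\cup\{\{i_2,t\}\}$) for $t\in V(C_1)$ (resp.\ $t\in V(C_2)$) is again a special $k$-matching, hence corresponds to a generator $u_m$ with $m<i$ by maximality, and clearly $V(M')\subset V(M)\cup\{t\}$; combined with Lemma \ref{Lemma:specialk-matching1} (special implies dominating), this verifies the criterion. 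Your sketch misplaces the difficulty (existence of special matchings for $k$ near $\nu(G)$) and omits this maximality-plus-exchange argument, which is the actual content of the implication.
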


For parts of the proof of this theorem we need to use Hochster's formula. Let $\Delta$ be a simplicial complex on the vertex set $[n]=\{1,2,\dots,n\}$, and let $I_\Delta$ be its Stanley-Reisner ideal. Recall that by Hochster's formula \cite[Theorem 8.1.1]{HHBook2011} we have
$$
\beta_i(S/I_{\Delta})=\sum_{W\subseteq[n]}\dim_K\widetilde{H}^{|W|-i-1}(\Delta_W;K),
$$
where $\widetilde{H}^j(\Delta_W;K)$ is the $j$th reduced simplicial cohomology module of the simplicial complex $\Delta_W=\{F\in\Delta:F\subseteq W\}$.\bigskip

Now, we explain the connection between simplicial complexes and squarefree powers of edge ideals. Recall that a \textit{matching} $M$ of a graph $G$ is a set of edges of $G$ such that no two distinct edges of $M$ have common vertices. If $|M|=k$, then $M$ is called a \textit{$k$-matching}. The vertex set $V(M)$ of $M$ is the set $\{i\in[n]:i\in e,\ \text{for some}\ e\in M\}$. The \textit{matching number} $\nu(G)$ of $G$ is the maximum of the sizes of the matchings of $G$. We have $\nu(G)=\nu(I(G))$. Thus $I(G)^{[k]}\ne(0)$ if and only if $k\le\nu(G)$. Let $G$ be a simple graph on $[n]$ and let $k=1,\dots,\nu(G)$. Then we define
$$
\Gamma_k(G)\ =\ \big\{F\subseteq V(G):\ V(M)\not\subseteq F\ \textup{for any}\ k\textup{-matching}\ M\ \textup{of}\ G\big\}.
$$
One has that $I(G)^{[k]}=I_{\Gamma_k(G)}$. In other words $\Gamma_k(G)$ is a simplicial complex on $V(G)$ whose Stanley-Reisner ideal is the $k$th squarefree power of $I(G)$. A case of particular interest occurs when $k=1$. Then $\Gamma_1(G)=\Delta(G^c)$ is the \textit{clique complex} of the complementary graph $G^c$ of $G$, and in particular $I(G)=I_{\Gamma_1(G)}=I_{\Delta(G^c)}$. A \textit{clique} $C$ of a graph $H$ is a subset of $V(H)$ such that for any $i,j\in C$, $i\ne j$, it follows that $\{i,j\}\in C$. The \textit{clique complex} of $H$ is the simplicial complex $\Delta(H)$ on vertex set $[n]$ whose faces are the cliques of $H$.

\begin{lemma}\label{Lem:Hi=0Chordal}
	Let $G$ be a chordal graph and let $\Delta(G)$ be its clique complex. Then $\widetilde{H}_i(\Delta(G);K)=\widetilde{H}^i(\Delta(G);K)=0$ for any $i\ne0$.
\end{lemma}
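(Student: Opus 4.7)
The plan is to combine Hochster's formula (just recalled in the paper) with Fr\"oberg's theorem, avoiding any direct topological manipulation of $\Delta(G)$.

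First I would observe that the Stanley-Reisner ideal of the clique complex coincides with the edge ideal of the complementary graph, i.e.\ $I_{\Delta(G)}=I(G^c)$. This is because the minimal non-faces of $\Delta(G)$ are exactly the minimum-size non-cliques of $G$, namely the two-element subsets $\{i,j\}\subseteq V(G)$ with $\{i,j\}\notin E(G)$, which are precisely the edges of $G^c$. Since $G$ is chordal, $(G^c)^c=G$ is chordal, so Fr\"oberg's theorem applied to the graph $G^c$ ensures that $I(G^c)=I_{\Delta(G)}$ admits a $2$-linear resolution. Consequently the graded Betti numbers of $S/I_{\Delta(G)}$ all sit on the linear strand: $\beta_{i,j}(S/I_{\Delta(G)})=0$ whenever $i\ge 1$ and $j\ne i+1$, and $\beta_{0,j}(S/I_{\Delta(G)})=0$ for every $j\ne 0$.

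Next I would apply the graded refinement of Hochster's formula,
\[
\beta_{i,j}(S/I_{\Delta(G)})=\sum_{W\subseteq V(G),\,|W|=j}\dim_K\widetilde{H}^{\,j-i-1}\bigl(\Delta(G)_W;K\bigr),
\]
specialized to $W=V(G)$, so that $j=n:=|V(G)|$. Setting $p=n-i-1$, the constraint $j\ne i+1$ becomes $p\ne 0$; since every summand on the right-hand side is non-negative, the vanishing $\beta_{n-p-1,n}(S/I_{\Delta(G)})=0$ forces the individual summand indexed by $W=V(G)$ to vanish, i.e.\ $\widetilde{H}^{\,p}(\Delta(G);K)=0$ for every $p\ne 0$. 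The analogous statement for reduced homology would then follow at once from the universal coefficient theorem, since $K$ is a field.

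The entire argument is essentially bookkeeping of indices in Hochster's formula; the real input is Fr\"oberg's theorem, which is already available in the paper. I therefore do not anticipate any serious obstacle beyond checking that the corner cases $i=0$ and $p=n-1$ are also covered by the linearity condition, which they are thanks to $\beta_{0,j}=0$ for $j\ne 0$.
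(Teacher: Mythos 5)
Your argument is correct, but it takes a genuinely different route from the paper. The paper simply invokes \cite[Corollary 9.2.2]{HHBook2011}, which directly gives $\widetilde{H}_i(\Delta(G);K)=0$ for $i\ne 0$ when $G$ is chordal (a combinatorial-topological fact, ultimately resting on Dirac's theorem about simplicial vertices), and then passes to cohomology by equality of dimensions over a field. You instead go through algebra: $I_{\Delta(G)}=I(G^c)$ because the clique complex is flag with minimal non-faces the non-edges of $G$; Fr\"oberg's theorem (Proposition \ref{Prop:GFrobergCochordal}(a) applied to $G^c$, whose complement is the chordal graph $G$) gives a $2$-linear resolution; and the graded Hochster formula restricted to $W=V(G)$ then forces $\widetilde{H}^{\,p}(\Delta(G);K)=0$ for $p\ne 0$, since every such $p$ lands in a graded Betti number off the linear strand (your index bookkeeping, including the corner cases $i=0$ and $p=n-1$, checks out; the degenerate case where $G$ is complete, so that $I_{\Delta(G)}=(0)$ and $\Delta(G)$ is a simplex, is trivially fine). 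This is a legitimate and non-circular derivation within the paper's framework, since Fr\"oberg's theorem is quoted as a black box; what it buys is a self-contained proof needing no further external citation, and it makes visible the standard equivalence between linearity of the resolution and vanishing of reduced cohomology of (induced subcomplexes of) $\Delta(G)$. What it costs is that it is longer than the paper's one-line citation, and it proves only the vanishing for the full complex rather than the stronger topological statement that $\Delta(G)$ has the homotopy type of a discrete set of points.
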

\begin{proof}
	By \cite[Corollary 9.2.2]{HHBook2011} we have that $\widetilde{H}_i(\Delta(G);K)=0$ for all $i\ne0$. Since $\dim_K\widetilde{H}_i(\Delta;K)=\dim_K\widetilde{H}^i(\Delta;K)$ for any simplicial complex $\Delta$ and any $i$, we also have that $\widetilde{H}^i(\Delta(G);K)=0$ for all $i\ne0$, as desired.
\end{proof}\medskip

The following result is well-known, see for example \cite[Problem 8.2]{HHBook2011}.
\begin{lemma}\label{Lem:H0connected}
	A simplicial complex $\Delta$ on $[n]$ is connected if and only if $\widetilde{H}^0(\Delta;K)=0$.
\end{lemma}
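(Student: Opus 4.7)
The plan is to reduce the statement to the standard computation of $\widetilde{H}_0$ in terms of connected components, then pass from homology to cohomology. Since $K$ is a field, the universal coefficient theorem (or simply the observation that the cochain complex is the $K$-dual of the chain complex) yields $\dim_K\widetilde{H}^0(\Delta;K)=\dim_K\widetilde{H}_0(\Delta;K)$. So it suffices to prove that $\widetilde{H}_0(\Delta;K)=0$ if and only if $\Delta$ is connected.

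I would work with the augmented chain complex
$$
C_1(\Delta)\ \xrightarrow{\ \partial_1\ }\ C_0(\Delta)\ \xrightarrow{\ \epsilon\ }\ K\ \to\ 0,
$$
where $\epsilon$ sends each vertex to $1$, so that $\widetilde{H}_0(\Delta;K)=\ker\epsilon/\operatorname{im}\partial_1$. Letting $c$ denote the number of connected components of $\Delta$ and $n=|V(\Delta)|$, the key step is to show that $\dim_K\widetilde{H}_0(\Delta;K)=c-1$. Since $\epsilon$ is surjective, $\dim_K\ker\epsilon=n-1$. On the other hand, $\operatorname{im}\partial_1$ is spanned by the elements $j-i$ with $\{i,j\}\in\Delta$, and a straightforward argument on paths in the $1$-skeleton shows that $v-w\in\operatorname{im}\partial_1$ precisely when $v$ and $w$ lie in the same connected component of $\Delta$. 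Consequently, $C_0(\Delta)/\operatorname{im}\partial_1\cong K^c$, giving $\dim_K\operatorname{im}\partial_1=n-c$.

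Putting the two dimension counts together yields $\dim_K\widetilde{H}_0(\Delta;K)=(n-1)-(n-c)=c-1$. Therefore $\widetilde{H}_0(\Delta;K)=0$ if and only if $c=1$, i.e.\ if and only if $\Delta$ is connected; by the opening remark, the same equivalence then holds for $\widetilde{H}^0(\Delta;K)$. There is no real obstacle here; the only point requiring care is the identification of $\operatorname{im}\partial_1$ with the subspace of $\ker\epsilon$ cut out by the path-equivalence relation on vertices, and the proof could equally well just cite \cite[Problem 8.2]{HHBook2011} as the statement already suggests.
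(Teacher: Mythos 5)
Your argument is correct: the reduction from $\widetilde{H}^0$ to $\widetilde{H}_0$ via field duality and the dimension count $\dim_K\widetilde{H}_0(\Delta;K)=c-1$ are both sound, and this is the standard proof. The paper itself offers no proof at all---it records the lemma as well-known and merely cites \cite[Problem 8.2]{HHBook2011}---so your write-up simply supplies the details behind that citation.
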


If $u\in S$ is a monomial, the set $\supp(u)=\{i:\ x_i\ \textup{divides}\ u\}$ is called the \textit{support} of $u$. Furthermore, a monomial ideal $I\subset S$ has \textit{linear quotients} if for some ordering $u_1,\dots,u_s$ of its minimal generating set $G(I)$, $(u_1,\dots,u_{j-1}):u_j$ is generated by variables, for all $j=2,\dots,s$.\smallskip

For our convenience we state the following results from \textup{\cite{F90}} and \textup{\cite[Corollary 3.2]{EHHM2022b}}.
\begin{proposition}\label{Prop:GFrobergCochordal}
	Let $G$ be a graph. Then
	\begin{enumerate}[label=\textup{(\alph*)}]
		\item  $I(G)$ has a linear resolution if and only if $G$ is cochordal.
		\item  If $G$ is cochordal, $I(G)^{[k]}$ has linear quotients for all $k=1,\dots,\nu(I(G))$.
	\end{enumerate}
\end{proposition}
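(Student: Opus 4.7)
The proposition packages two classical results, so I would split the proof cleanly into (a) and (b), using tools already collected in the paper (Hochster's formula, Lemma \ref{Lem:Hi=0Chordal}, Lemma \ref{Lem:H0connected}).

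For part (a), the plan is a direct application of Hochster's formula to the Stanley--Reisner presentation $I(G)=I_{\Delta(G^c)}$, which is made explicit earlier in this section. Since $I(G)$ is generated in degree $2$, a linear resolution amounts to vanishing of $\beta_{i,i+j}(I(G))$ for $j\neq 1$; Hochster's formula expresses this as vanishing of $\widetilde{H}^{|W|-i-2}(\Delta(G^c)_W;K)$ for the relevant subsets $W\subseteq[n]$. The key observation is that the restriction $\Delta(G^c)_W$ is the clique complex of the induced subgraph $(G^c)[W]$. For the ``if'' direction, assuming $G^c$ chordal, every induced subgraph is chordal as well, so by Lemma \ref{Lem:Hi=0Chordal} only $\widetilde{H}^0$ can survive, which by Lemma \ref{Lem:H0connected} contributes only to the linear strand. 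For the ``only if'' direction, I would contrapositively assume that $G^c$ contains an induced cycle of length $k\geq 4$ on vertex set $W$; the restriction $\Delta(G^c)_W$ is then a $1$-dimensional cycle, whose $\widetilde{H}^1$ is nonzero, producing a nonvanishing $\beta_{k-3,k-1}(I(G))$ and thus violating linearity.

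For part (b), the plan is to exhibit an explicit linear-quotients order on the generators of $I(G)^{[k]}$. Since $G^c$ is chordal, fix a perfect elimination ordering $v_1,\ldots,v_n$ of $G^c$, i.e., for each $i$ the later $G^c$-neighbors of $v_i$ form a clique in $G^c$, equivalently an independent set in $G$. Each generator of $I(G)^{[k]}$ corresponds to a $k$-matching $M=\{e_1,\ldots,e_k\}$ of $G$ via the monomial $u_M=\prod x_{e_j}$. Order the matchings lexicographically using the sorted sequence of endpoints in the chosen vertex order. The main verification is: for each $M$ and each $M'$ strictly earlier than $M$, there is a variable $x_\ell\notin\supp(u_M)$ and a matching $M''$ strictly earlier than $M$ such that $u_{M''}$ differs from $u_M$ in exactly the variable $x_\ell$. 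To produce $M''$, I would identify the first position where $M'$ and $M$ differ, focus on the smallest-indexed endpoint $v_i$ appearing in an edge of $M$ but not of $M'$, and use the perfect elimination property together with the edge of $M'$ incident to $v_i$ to perform a controlled single-edge swap in $M$, preserving the matching property.

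The main obstacle is the combinatorial bookkeeping in (b): one has to check that the proposed swap indeed yields a $k$-matching of $G$ (rather than creating a multiply-used vertex) and that the swapped matching is genuinely earlier in the order. This is exactly where chordality of $G^c$ does the work, as the ``later common neighbors form a clique'' condition is what guarantees the candidate new edge lies in $G$ and does not conflict with the remaining edges of $M$. Part (a) is by contrast fairly mechanical once Hochster's formula and Lemma \ref{Lem:Hi=0Chordal} are in hand.
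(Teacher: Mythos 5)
First, note that the paper does not prove this proposition at all: it is stated explicitly as a restatement of known results, part (a) being Fr\"oberg's theorem \cite{F90} and part (b) being \cite[Corollary 3.2]{EHHM2022b}, so there is no in-paper proof to compare against. Your treatment of (a) is the standard Hochster-formula proof and is essentially correct: $\Delta(G^c)_W$ is the clique complex of the induced subgraph $(G^c)_W$, Lemma \ref{Lem:Hi=0Chordal} kills all reduced cohomology except $\widetilde{H}^0$ when $G^c$ is chordal, and an induced $k$-cycle ($k\ge 4$) in $G^c$ contributes a nonzero $\widetilde{H}^1$. Two index slips should be fixed: a linear resolution of $I(G)$ means $\beta_{i,i+j}(I(G))=0$ for $j\ne 2$ (not $j\ne 1$), and the induced $k$-cycle on $W$ yields a nonzero $\beta_{k-3,k}(I(G))$ (internal degree $|W|=k$), not $\beta_{k-3,k-1}$.

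Part (b) contains a genuine gap. You never prove that your lexicographic order (with respect to a perfect elimination ordering of $G^c$) is a linear quotients order for $I(G)^{[k]}$; you describe a plan and yourself label the crucial verification ``the main obstacle.'' Moreover, the plan as written does not parse: for linear quotients one must, given matchings $M'<M$, produce a vertex $\ell\in V(M')\setminus V(M)$ and an earlier matching $M''$ with $V(M'')=(V(M)\cup\{\ell\})\setminus\{s\}$ for some $s\in V(M)$; you instead select a vertex ``appearing in an edge of $M$ but not of $M'$'' and then invoke ``the edge of $M'$ incident to'' that vertex, which does not exist. Whether naive lex order with respect to a perfect elimination ordering works at all for $k\ge 2$ is a substantive claim requiring proof, not bookkeeping. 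The established argument behind \cite[Corollary 3.2]{EHHM2022b} avoids this entirely: by Herzog--Hibi--Zheng \cite{HHZ2004}, the ordinary powers $I(G)^k$ of a cochordal graph have linear quotients, and for an equigenerated monomial ideal with linear quotients the subideal generated by its squarefree minimal generators inherits linear quotients in the induced order (if $u_k:u_j=(x_t)$ with $u_j$ squarefree, then $x_t\nmid u_j$ and $u_k=x_tu_j/x_s$ is again squarefree, hence the witness stays inside the squarefree part). Since $I(G)^{[k]}$ is exactly the squarefree part of $I(G)^k$, part (b) follows. You should either adopt this route or supply a complete, corrected verification that your proposed order has linear quotients.
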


To prove Theorem \ref{Thm:gIchordal} we need the concept of \textit{dominating $k$-matchings}. Recall that a $k$-matching $M$ of $G$ is a \textit{dominating $k$-matching} if $V(M)$ is a \textit{dominating set}, which means that any vertex $v\in V(G)-V(M)$ is adjacent to a vertex of $V(M)$.
\begin{proposition}\label{Criterion:g=0}
	\textup{\cite[Proposition 3.3]{EHHM2022b}} Let $G$ be a graph with no isolated vertices and $1\le k\le\nu(G)$. Suppose that $I(G)^{[k]}$ has linear quotients with respect to the ordering $u_1,\dots,u_s$ of its minimal monomial generators. Then the following statements are equivalent:
	\begin{enumerate}[label=\textup{(\roman*)}]
		\item $g_{I(G)}(k)=0$.
		\item There exist a dominating $k$-matching $M$ and some $i=2,\dots,s$ which satisfy the following conditions:
		\begin{enumerate}[label=\textup{(\alph*)}]
			\item $V(M)=\supp(u_i)$, and
			\item for any $t\in V(G)-V(M)$, there exists a $k$-matching $M'$ with $V(M')=\supp(u_m)$ for some $m=1,\dots,i-1$ such that $V(M')\subset V(M)\cup\{t\}$.
		\end{enumerate}
	\end{enumerate}
	In particular, if $G$ is a cochordal graph, the statements \textup{(i)} and \textup{(ii)} are equivalent.
\end{proposition}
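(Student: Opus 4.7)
The plan is to convert the numerical condition $g_{I(G)}(k)=0$ into a statement about the projective dimension of $I(G)^{[k]}$ via the Auslander--Buchsbaum formula, and then exploit the combinatorial description of $\pd$ for an ideal with linear quotients. The translation between algebra and matchings in (ii) then falls out by unpacking what the colon ideal $(u_1,\dots,u_{j-1}):u_j$ looks like for squarefree equigenerated monomials.

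Every minimal generator of $I(G)^{[k]}$ is squarefree of degree $2k$, namely $u_M=\prod_{v\in V(M)}x_v$ for a $k$-matching $M$, so $d_k=2k$. Setting $n=|V(G)|$, we see that $g_{I(G)}(k)=0$ is equivalent to $\pd(I(G)^{[k]})=n-2k$. I would then invoke the standard fact that for a monomial ideal $I$ with linear quotients with respect to an ordering $u_1,\dots,u_s$, writing $\textup{set}(u_j)=\{t:x_t\in(u_1,\dots,u_{j-1}):u_j\}$, one has $\pd(I)\le\max_j|\textup{set}(u_j)|$, with equality when all $u_j$ are of the same degree (this follows from the Herzog--Takayama resolution, which is minimal in the equigenerated case).

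Since the generators of $I(G)^{[k]}$ are squarefree of common degree $2k$, a variable $x_t$ lies in $(u_1,\dots,u_{j-1}):u_j$ exactly when some earlier $u_m$ equals $x_t u_j/x_{t'}$ for some $t'\in\supp(u_j)$; squarefreeness forces $t\notin\supp(u_j)$. Hence $|\textup{set}(u_j)|\le n-2k$, and $g_{I(G)}(k)=0$ is equivalent to the existence of some $i$ with $|\textup{set}(u_i)|=n-2k$, i.e.\ to: every $t\in V(G)\setminus\supp(u_i)$ lies in $\textup{set}(u_i)$. Setting $M=M_i$ with $V(M)=\supp(u_i)$, this is exactly: for every $t\in V(G)\setminus V(M)$ there exists $t'\in V(M)$ such that $(V(M)\setminus\{t'\})\cup\{t\}$ equals $V(M')$ for some $k$-matching $M'$ whose generator $u_m$ satisfies $m<i$. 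This gives conditions (a) and (b) of (ii); moreover, since $M'$ is a matching containing $t$, the edge of $M'$ through $t$ has its other endpoint in $V(M')\setminus\{t\}\subset V(M)$, so $V(M)$ is a dominating set and $M$ is a dominating $k$-matching.

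Conversely, (ii)(b) forces $|\textup{set}(u_i)|\ge n-2k$; combined with the upper bound this gives $\pd(I(G)^{[k]})=n-2k$, hence $g_{I(G)}(k)=0$. The cochordal case is immediate from Proposition \ref{Prop:GFrobergCochordal}(b), which supplies the linear quotients hypothesis for every $k$. The main subtle point is ruling out the degenerate possibility $V(M')=V(M)$ allowed a priori by "$V(M')\subset V(M)\cup\{t\}$": since two $k$-matchings with the same vertex set yield the same monomial generator of $I(G)^{[k]}$, $V(M')=V(M)$ would force $u_m=u_i$, contradicting $m<i$. Hence $t\in V(M')$, producing the "one-swap" form $(V(M)\setminus\{t'\})\cup\{t\}$ used above to identify condition (b) with $t\in\textup{set}(u_i)$.
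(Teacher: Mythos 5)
The paper does not actually prove this proposition: it is imported verbatim from \cite[Proposition 3.3]{EHHM2022b}, so there is no internal proof to measure your argument against. Judged on its own terms, your argument is correct and self-contained, and it follows what is essentially the standard route for such statements: reduce $g_{I(G)}(k)=0$ to $\pd(I(G)^{[k]})=n-2k$ via Auslander--Buchsbaum and $d_k=2k$; use the fact that for an equigenerated ideal with linear quotients the iterated mapping-cone resolution is minimal, so $\pd(I(G)^{[k]})=\max_j|\textup{set}(u_j)|$; note $\textup{set}(u_j)\subseteq V(G)\setminus\supp(u_j)$, so this maximum is at most $n-2k$; and identify the equality case with condition (ii). You also handle the two points that genuinely need care: that $V(M')=V(M)$ is impossible in (ii)(b) since distinct indices give distinct squarefree monomials of the same degree, and that the edge of $M'$ through $t$ has its other endpoint in $V(M)$, which is exactly what makes $M$ dominating. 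This supplies a legitimate proof of a result the present paper only cites, and it almost certainly mirrors the argument in the source.

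One caveat concerns the statement as quoted rather than your proof. If $n=2k$ then every generator of $I(G)^{[k]}$ equals $x_1\cdots x_n$, so $s=1$; then (i) holds (your equivalence is realized by $i=1$ with $|\textup{set}(u_1)|=0=n-2k$), while (ii) is vacuously false because it insists on $i\ge 2$. Your intermediate equivalence, which allows $i=1$, is the correct formulation; for $n>2k$ the restriction to $i\ge2$ costs nothing since $\textup{set}(u_1)=\emptyset$ cannot attain $n-2k>0$. It would be worth flagging this degenerate case explicitly rather than letting it pass silently.
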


Let us illustrate the previous criterion with an example.

\begin{example}\label{Ex:specialk-matc}
	\rm Consider the graph $G$ on vertex set $[6]=\{1,2,\dots,6\}$ depicted below.
	\begin{center}
		\begin{tikzpicture}[scale=0.8]
			\filldraw (0.1,0) circle (2pt) node[left]{6};
			\filldraw (1.9,0) circle (2pt) node[right]{3};
			\filldraw (-0.3,1.6) circle (2pt) node[left]{5};
			\filldraw (2.3,1.6) circle (2pt) node[right]{4};
			\filldraw (1,-0.9) circle (2pt) node[below]{1};
			\filldraw (1,2.6) circle (2pt) node[above]{2};
			\draw[-] (0.1,0) -- (1.9,0);
			\draw[-] (1.9,0) -- (2.3,1.6);
			\draw[-] (0.1,0) -- (1,-0.9);
			\draw[-] (1.9,0) -- (-0.3,1.6);
			\draw[-] (1,2.6) -- (0.1,0);
			\draw[-] (-0.3,1.6) -- (1,2.6);
			\draw[-] (2.3,1.6) -- (1,2.6);
			\filldraw (7,0.2) circle (2pt) node[left]{4};
			\filldraw (9.5,0.2) circle (2pt) node[right]{5};
			\filldraw (7,2.7) circle (2pt) node[left]{2};
			\filldraw (9.5,2.7) circle (2pt) node[right]{3};
			\filldraw (8.25,1.45) circle (2pt) node[right,xshift=0.7]{1};
			\filldraw (8.25,-1.05) circle (2pt) node[below]{6};
			\draw[-] (7,0.2) -- (9.5,0.2);
			\draw[-] (7,0.2) -- (9.5,2.7);
			\draw[-] (9.5,0.2) -- (7,2.7);
			\draw[-] (7,2.7) -- (9.5,2.7);
			\draw[-] (7,0.2) -- (8.25,-1.05);
			\draw[-] (9.5,0.2) -- (8.25,-1.05);
			\filldraw (-1,3) node[left]{$G$};
			\filldraw (5.5,3) node[left]{$G^c$};
		\end{tikzpicture}
	\end{center}\vspace*{-0.2cm}
	Note that $G$ is a cochordal graph, $G^c$ is connected with a cut vertex, namely 1. Since $G^c$ is chordal, by Proposition \ref{Prop:GFrobergCochordal}(b) all squarefree powers $I(G)^{[k]}$ have linear quotients. We have $\nu(G)=3$, $g_{I(G)}(1)=1$ and $g_{I(G)}(2)=g_{I(G)}(3)=0$. By using \textit{Macaulay2} \cite{GDS}, we obtained the following list. It provides a linear quotient order for $I(G)^{[1]}$, $I(G)^{[2]}$ and $I(G)^{[3]}$,
	\begin{align*}
		I(G)^{[1]}\ :\ \ \ &x_{2}x_{4},\:x_{3}x_{4},\:x_{2}x_{5},\:x_{3}x_{5},\:x_{2}x_{6},\:x_{3}x_{6},\:x_{1}x_{6};
		\\
		I(G)^{[2]}\ :\ \ \ &x_{2}x_{3}x_{4}x_{5},\:x_{2}x_{3}x_{4}x_{6},\:x_{1}x_{2}x_{4}x_{6},\:x_{1}x_{3}x_{4}x_{6},\\
		&x_{1}x_{2}x_{5}x_{6},\:x_{1}x_{3}x_{5}x_{6},\:x_{2}x_{3}x_{5}x_{6};
		\\
		I(G)^{[3]}\ :\ \ \ &x_{1}x_{2}x_{3}x_{4}x_{5}x_{6}.
	\end{align*}
	One can see each minimal generator $u\in I(G)^{[k]}$ as a $k$-matching. Fix $k=2$. Then we can order the generators of $I(G)^{[2]}$ as above. For instance $u_3=(x_1x_6)(x_2x_4)$ corresponds to the 2-matching $\big\{\{1,6\},\{2,4\}\big\}$. Consider now $u_6=(x_1x_6)(x_3x_5)$ and let $M=\big\{\{1,6\},\{3,5\}\big\}$. We claim that $M$ satisfies Proposition \ref{Criterion:g=0}(ii). Indeed $M$ is a dominating 2-matching and $V(G)-V(M)=\{2,4\}$. Let $M'=\big\{\{1,6\},\{2,5\}\big\}$, then $V(M')\subset V(M)\cup\{2\}$, $V(M')=\supp(u_5)$ and $5<6$. Likewise for $t=4$, we can consider $M''=\big\{\{1,6\},\{2,4\}\big\}$, then $V(M'')=\supp(u_3)\subset V(M)\cup\{2\}$ and $3<6$.\smallskip
	
	The vertex 1 in $G^c$ is a cut vertex and $G^c-\{1\}$ has two connected components: $C_1$ consisting only of the edge $\{2,3\}$ and $C_2$ consisting only of the triangle with vertices $4,5,6$. Note that in the matching $M=\big\{\{1,6\},\{3,5\}\big\}$, corresponding to $u_6\in I(G)^{[2]}$, the first edge arises by considering the cut vertex $1$ of $G^c$ and the second edge is an edge connecting the vertex $3\in V(C_1)$ to the vertex $5\in V(C_2)$. Furthermore $u_6$ is the biggest monomial in the given linear quotient order corresponding to this kind of matching, \emph{i.e.}, such that $\supp(u_6)=V(M)$ with $M$ such a matching.
\end{example}\medskip

We give a name to the kind of $k$-matchings we discovered in the previous example.\smallskip

Let $G$ be a graph such that $G^c$ is chordal, connected with a cut vertex. Let $i$ be a cut vertex of $G^c$. After a relabeling we can assume $i=1$. Then $G^c-\{1\}$ has at least two connected components. Let $C_1$ be one of these connected components and let $C_2$ be the union of all other connected components. Then $V(G^c-\{1\})=V(C_1)\cup V(C_2)$ and furthermore, for any $i\in V(C_1)$ and any $j\in V(C_2)$, $\{i,j\}\in E(G)$. The open neighbourhood $N_{G}(1)=\{j\in[n]-\{1\}:\{1,j\}\in E(G)\}$ is non-empty, otherwise $1$ would be an isolated vertex of $G$. Recall that we only consider graphs with no isolated vertices.
\begin{definition}\label{Def:specialk-matc}
	\rm Let $G$ be a simple finite graph such that $G^c$ is chordal, connected with cut vertex $1$. Let $k\ge2$. A $k$-matching $M=\{e_{1},e_{2},\dots,e_{k}\}$, $e_{p}=\{i_p,j_p\}\in E(G)$, $p=1,\dots,k$, is called \textit{special} if
	\begin{enumerate}[label=\textup{(\roman*)}]
		\item $e_1=\{1,j\}$ for some $j\in N_G(1)$, and
		\item $i_2\in V(C_1)$ and $j_2\in V(C_2)$.
	\end{enumerate}
\end{definition}\smallskip

\begin{lemma}\label{Lemma:specialk-matching}
	Let $G$ be a graph with no isolated vertices such that $G^c$ is chordal, connected with cut vertex $1$. Then for any $2\le k\le\nu(G)$, there exists a special $k$-matching of $G$.
\end{lemma}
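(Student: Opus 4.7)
The plan is to construct a special $k$-matching by starting from a well-chosen maximum matching and performing local swaps inside it.

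I would first establish two structural facts. Since the pair $\{a,b\}$ with $a\in V(C_1)$, $b\in V(C_2)$ lies in different components of $G^c-\{1\}$, it is not an edge of $G^c$, so $G$ contains the complete bipartite graph on $(V(C_1),V(C_2))$. Moreover, since $G^c$ is connected and $1$ is a cut vertex, $1$ has a neighbour in $G^c$ inside each component of $G^c-\{1\}$; in particular, if $V(C_i)=\{w\}$ is a singleton, then $w$ is forced to be the $G^c$-neighbour of $1$ in that component, so $w\notin N_G(1)$. This rules out the degenerate situations later in the argument.

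Next, fix any $j_1\in N_G(1)$ (nonempty since $1$ is not isolated). I would produce a maximum matching $M^*$ of $G$ containing $e_1=\{1,j_1\}$: starting from any maximum matching, if $1$ is unmatched then $j_1$ must be saturated by maximality, and swapping the matching edge at $j_1$ for $\{1,j_1\}$ yields a matching of the same (maximum) size. Now set $M_0=M^*\setminus\{e_1\}$, a matching of size $\ge k-1$ on $V(G)\setminus\{1,j_1\}$. If $M_0$ already contains some $V(C_1)$-$V(C_2)$ edge $e_2$, any $k-2$ further edges of $M_0$ complete the special $k$-matching. Otherwise split $M_0=A\sqcup B$ according to whether an edge lies inside $V(C_1)$ or inside $V(C_2)$.

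The core of the proof is then a swap that creates a $V(C_1)$-$V(C_2)$ edge while preserving $e_1$ and the matching size. If both $A\ne\emptyset$ and $B\ne\emptyset$, I would pick $\{a,a'\}\in A$ and $\{b,b'\}\in B$ and replace them by the two bipartite edges $\{a,b\}$ and $\{a',b'\}$. If $B=\emptyset$ (the case $A=\emptyset$ is symmetric), I would select a vertex $b^*\in V(C_2)\setminus V(M^*)$ and replace some $\{a,a'\}\in A$ by $\{a,b^*\}$. In each case the resulting matching still contains $e_1$, now contains a $V(C_1)$-$V(C_2)$ edge, and has cardinality $\ge k$; choosing $e_2$ to be this bipartite edge and any $k-2$ further edges of the matching produces the special $k$-matching.

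The main obstacle is verifying that the vertex $b^*$ in the one-sided case genuinely exists. Since in this case $V(M_0)\subseteq V(C_1)$, we have $V(M^*)\cap V(C_2)\subseteq\{j_1\}\cap V(C_2)$, and the failure $V(C_2)\subseteq V(M^*)$ would force $V(C_2)=\{j_1\}$. But then the singleton vertex $j_1$ of $V(C_2)$ would lie in $N_G(1)$, contradicting structural fact (b). The disjointness checks are automatic: the swapped-in vertices lie in $V(C_1)\cup V(C_2)$ so they avoid $1$, and since $j_1\in V(M^*)$ while $b^*\notin V(M^*)$ we also have $b^*\ne j_1$, so the new edges do not meet $e_1=\{1,j_1\}$.
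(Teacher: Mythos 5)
Your proof is correct and follows essentially the same route as the paper's: first adjust a matching so that it contains an edge $\{1,j\}$ with $j\in N_G(1)$, then manufacture a $V(C_1)$--$V(C_2)$ edge either by swapping one edge from each side for two cross edges, or, when all remaining edges lie on one side, by rerouting a single edge onto an unmatched vertex of the other side. Your explicit verification that the unmatched vertex $b^*$ exists (ruling out $V(C_2)=\{j_1\}$ via the fact that a cut vertex of the connected graph $G^c$ must have a $G^c$-neighbour in every component of $G^c-\{1\}$) is a point the paper's case division treats only implicitly, but the underlying argument is the same.
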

\begin{proof}
	Let $M=\{e_{1},e_{2},\dots,e_{k}\}$, $e_{p}=\{i_p,j_p\}\in E(G)$, $p=1,\dots,k$ be an arbitrary $k$-matching. Firstly, we show that we can assume $e_1=\{1,j\}$ with $j\in N_G(1)$. If $1\in V(M)$ there is nothing to prove. Assume that $1\notin V(M)$. Let $N_{G}(1)=\{j\in[n]-\{1\}:\{1,j\}\in E(G)\}$ be the open neighbourhood of $1$ in $G$. As said before $N_{G}(1)$ is non-empty, otherwise $1$ would be an isolated vertex of $G$. If for some $j\in N_{G}(1)$, $j\in\supp(M)$, then $j=i_{q}$ for some $q$. We may assume $q=1$. Then $\big\{\{1,j\},\{i_2,j_2\},\dots,\{i_k,j_k\}\big\}$ is the desired $k$-matching of $G$. Otherwise, if $N_G(1)\cap V(M)=\emptyset$, then $(M-\{e_1\})\cup\big\{\{1,j\}\big\}$, with $j\in N_G(1)$, is the desired $k$-matching.
	
	Thus we may assume that $e_1=\{1,j\}$ with $j\in N_G(1)$. Now we prove that we can assume $i_{2}\in V(C_1)$ and $j_{2}\in V(C_2)$. We distinguish the two possible cases.
	\smallskip\\
	\textsc{Case 1.} Suppose that $V(M)-\{1\}\subseteq V(C_1)$. The case $V(M)-\{1\}\subseteq V(C_2)$ is analogous. Pick $i\in V(C_1)\setminus\{j\}$. Then we can consider the $k$-matching
	$$
	M'=(M-\{e_2\})\cup\big\{\{i,j_2\}\big\}=\big\{\{1,j\},\{i,j_2\},e_3,\dots,e_k\big\}.
	$$
	$M'$ is a special $k$-matching since $i\in V(C_1)$ and $j_2\in V(C_2)$.
	\smallskip\\
	\textsc{Case 2.} Suppose now that there exist $i\in V(C_1)$ and $j\in V(C_2)$ such that $\{i,j\}\subseteq V(M)$. If $i_q\in V(C_1)$ and $j_q\in V(C_2)$ for some $q$, then there is nothing to prove. Suppose that this is not true. Then since $k\ge2$ there exist integers $q_1$ and $q_2$ such that $i_{q_1},j_{q_1}\in V(C_1)$ and $i_{q_2},j_{q_2}\in V(C_2)$. Note that $q_1,q_2>1$ since $e_{1}=\{1,j\}$ and $1\notin C_1,C_2$. But then $\{i_{q_1},j_{q_2}\},\{j_{q_1},i_{q_2}\}$ are edges of $G$. Thus
	$$
	M'=(M-\{e_{q_1},e_{q_2}\})\cup\big\{\{i_{q_1},j_{q_2}\},\{j_{q_1},i_{q_2}\}\big\}
	$$
	is the desired special $k$-matching.\smallskip
	
	The cases above show that a special $k$-matching of $G$ exists.
\end{proof}
\begin{lemma}\label{Lemma:specialk-matching1}
	Let $G$ be a graph with no isolated vertices such that $G^c$ is chordal, connected with cut vertex $1$. Then a special $k$-matching is a dominating $k$-matching.
\end{lemma}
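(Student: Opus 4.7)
The plan is essentially to exploit the fact that a cut vertex in $G^c$ forces a complete bipartite structure between the two component-blocks in $G$, so the two distinguished edges $e_1$ and $e_2$ of a special matching already suffice to dominate every vertex outside $V(M)$.

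First I would record the key consequence of the cut vertex assumption. By definition of $C_1$ and $C_2$ in the paragraph preceding Definition \ref{Def:specialk-matc}, the vertex set $V(G^c)-\{1\}$ decomposes as the disjoint union $V(C_1)\cup V(C_2)$, and $V(C_1)$ lies in a different connected component of $G^c-\{1\}$ from any vertex of $V(C_2)$. In particular, $G^c$ contains no edge between $V(C_1)$ and $V(C_2)$, so by taking complements
\[
\{i,j\}\in E(G)\qquad\text{for all } i\in V(C_1),\ j\in V(C_2).
\]

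Next I would verify the dominating condition directly. Let $M=\{e_1,\dots,e_k\}$ be a special $k$-matching with $e_1=\{1,j\}$ and $e_2=\{i_2,j_2\}$, $i_2\in V(C_1)$, $j_2\in V(C_2)$. Pick any $v\in V(G)-V(M)$. Since $1\in V(M)$ we have $v\neq 1$, so $v\in V(C_1)\cup V(C_2)$. If $v\in V(C_1)$, then $v\neq j_2$ (because $j_2\in V(M)$) and the observation above gives $\{v,j_2\}\in E(G)$, so $v$ is adjacent to $j_2\in V(M)$. Symmetrically, if $v\in V(C_2)$, then $v\neq i_2$ and $\{v,i_2\}\in E(G)$, so $v$ is adjacent to $i_2\in V(M)$. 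In either case $v$ is dominated by $V(M)$, which finishes the argument.

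There is no real obstacle here: once the bipartite-completeness of $G$ between $V(C_1)$ and $V(C_2)$ is extracted from the cut vertex hypothesis on $G^c$, the second edge $e_2$ of any special matching automatically dominates every remaining vertex. The only subtlety to mention is that $V(M)$ indeed contains the cut vertex $1$, which is guaranteed by condition (i) of Definition \ref{Def:specialk-matc}; this is what rules out the case $v=1$ in the analysis above.
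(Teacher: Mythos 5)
Your proof is correct and follows essentially the same argument as the paper: both use the fact that the cut vertex $1$ of $G^c$ forces every pair $i\in V(C_1)$, $j\in V(C_2)$ to be an edge of $G$, so the second edge $e_2=\{i_2,j_2\}$ of the special matching dominates all vertices outside $V(M)$. Your write-up is just slightly more explicit about deriving the bipartite completeness and about why $v\ne 1$.
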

\begin{proof}
	Let $M=\{e_{1},e_{2},\dots,e_{k}\}$, $e_{p}=\{i_p,j_p\}\in E(G)$, $p=1,\dots,k$ be a special $k$-matching. Thus $e_1=\{1,j\}$ with $j\in N_G(1)$, $i_2\in V(C_1)$ and $j_2\in V(C_2)$. Let $t\in V(G)-V(M)$. Since $V(G)=V(C_1)\cup V(C_2)\cup\{1\}$ and $1\in V(M)$, either $t\in V(C_1)$ or $t\in V(C_2)$. If $t\in V(C_1)$, then $t$ is adjacent to $j_2\in V(C_2)$ and $\{i_2,j_2\}\in M$. Otherwise, if $t\in V(C_2)$, then $t$ is adjacent to $i_2\in V(C_1)$, as wanted.
\end{proof}

Now, we are in the position to prove Theorem \ref{Thm:gIchordal}.

\begin{proof}[Proof of Theorem \ref{Thm:gIchordal}]
	We are going to prove the implications (iii)$\Rightarrow$(ii), (ii)$\Rightarrow$(i) and (i)$\Rightarrow$(iii). The implication (iii)$\Rightarrow$(ii) is obvious.
	\medskip
	
	(ii)$\Rightarrow$(i): By the Auslander-Buchsbaum formula we know that (ii) is equivalent to $\pd(S/I(G))=n-2$. Since $I(G)=I_{\Delta(G^c)}$, by Hochster's formula
	\begin{align*}
		\beta_{n-2}(S/I(G))&=\ \ \ \sum_{W\subseteq[n]}\ \dim_K\widetilde{H}^{|W|-n+1}({\Delta(G^c)}_W;K)\\
		&=\sum_{\substack{W\subseteq[n]\\ n-1\le|W|\le n}}\!\!\!\dim_K\widetilde{H}^{|W|-n+1}({\Delta(G^c)}_W;K)
	\end{align*}
	must be non-zero. Here the last equation follows from the fact that $\widetilde{H}^j(\Delta;K)=0$ if $j<0$. Since $G^c$ is a chordal graph, by Lemma \ref{Lem:Hi=0Chordal} $\widetilde{H}^1(\Delta(G^c);K)=0$. Hence, the previous formula simplifies to
	\begin{align}
		\label{eq:beta(n-2)chordal}\beta_{n-2}(S/I(G))&=\sum_{j=1}^n\dim_K\widetilde{H}^{0}({\Delta(G^c)}_{[n]-\{j\}};K).
	\end{align}
	Since $\beta_{n-2}(S/I(G))$ is non-zero, there exists at least one integer $j\in[n]$ such that $\widetilde{H}^{0}({\Delta(G^c)}_{[n]-\{j\}};K)\ne0$, which means that $G^c$ has a cut vertex. Moreover, $G^c$ is connected by \cite[Corollary 2.2]{EHHM2022b}.
	\medskip
	
	(i)$\Rightarrow$(iii): Since $G^c$ is connected, by \cite[Corollary 2.2]{EHHM2022b} we have $g_{I(G)}(1)\ge1$. Since $G^c$ is chordal, under our assumptions equation (\ref{eq:beta(n-2)chordal}) holds. But $G^c$ has a cut vertex, which means that there exists a $j$ such that $\widetilde{H}^{0}({\Delta(G^c)}_{[n]-\{j\}};K)\ne0$ (Lemma \ref{Lem:H0connected}). Using formula (\ref{eq:beta(n-2)chordal}) this shows that $\beta_{n-2}(S/I(G))$ is non-zero and thus $g_{I(G)}(1)=1$.\smallskip
	
	It remains to prove that $g_{I(G)}(k)=0$ for all $k\ge2$. For this purpose, we use Proposition \ref{Criterion:g=0}. Since $G^c$ is chordal, by Proposition \ref{Prop:GFrobergCochordal}(b) all squarefree powers $I(G)^{[k]}$ have linear quotients. Let $2\le k\le\nu(G)$ and let $u_1,\dots,u_s$ be a linear quotient ordering for $I(G)^{[k]}$. By Lemma \ref{Lemma:specialk-matching} a special $k$-matching of $G$ exists. Let $i$ be the biggest integer such that $\supp(u_i)=V(M)$ with $M$ a special $k$-matching of $G$. Let $M=\{e_1,e_2,\dots,e_k\}$ be a special $k$-matching such that $\supp(u_i)=V(M)$. Assume the assumptions and notation before Definition \ref{Def:specialk-matc}. Then $e_1=\{1,j\}$ with $j\in N_G(1)$, $1$ is a cut vertex of $G^c$, $i_2\in V(C_1)$ and $j_2\in V(C_2)$. We claim that $M$ satisfies condition (ii) of Proposition \ref{Criterion:g=0}. Since $G$ is cochordal, this is equivalent to $g_{I(G)}(k)=0$ and will conclude our proof.
	
	By Lemma \ref{Lemma:specialk-matching1}, $M$ is a dominating $k$-matching. Let $t\in V(G)-V(M)$. Since $V(G^c-\{1\})=V(C_1)\cup V(C_2)$, then either $t\in V(C_1)$ or $t\in V(C_2)$. If $t\in V(C_1)$, then
	$$
	M'=(M-\{e_2\})\cup\big\{\{t,j_2\}\big\}
	$$
	is again a special $k$-matching, and $V(M')=\supp(u_m)$ for some $m$. By our assumption on $i$, we have $m<i$ and furthermore $V(M')\subset V(M)\cup\{t\}$.
	
	Similarly, if $t\in V(C_2)$ then
	$$
	M'=(M-\{e_2\})\cup\big\{\{i_2,t\}\big\}
	$$
	is special $k$-matching  with $V(M')=\supp(u_m)$, $m<i$ and $V(M')\subset V(M)\cup\{t\}$.
\end{proof}

Theorem \ref{Thm:gIchordal} has the following interesting consequence.
\begin{theorem}\label{Thm:s<m}
	Given positive integers $s<m$, there exists a graph $G$ with matching number $\nu(G)=m$ such that $g_{I(G)}(k)=0$ if and only if $k=s+1,\dots,m$.
\end{theorem}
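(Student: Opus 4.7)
The plan is to construct $G$ as the disjoint union of a carefully chosen cochordal graph $H$ together with $s - 1$ extra isolated edges, and then to read off $g_{I(G)}$ by iterating the adjoining-edge recursion of Lemma \ref{Lemma:adjoiningedge}.

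First I would fix a cochordal graph $H$ (with no isolated vertices) whose complement $H^c$ is connected, chordal, and has a cut vertex, and whose matching number is $\nu(H) = m - s + 1$; note that this is $\ge 2$ because $s < m$. A uniform choice is $H = P_{2(m-s+1)}^c$, the complement of a path on $2(m-s+1)$ vertices: the path $P_{2N}$ is chordal and connected, every internal vertex is a cut vertex, and $P_{2N}^c$ admits the perfect matching $\{\{i,\, i+N\} : 1 \le i \le N\}$, giving $\nu(P_{2N}^c) = N$. Theorem \ref{Thm:gIchordal} then guarantees $g_{I(H)}(1) = 1$ and $g_{I(H)}(k) = 0$ for $2 \le k \le \nu(H)$. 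Next, set
\[
G \; = \; H \; \sqcup \; \underbrace{K_2 \sqcup \cdots \sqcup K_2}_{s - 1 \text{ copies}},
\]
with the $s - 1$ extra edges on pairwise disjoint new vertex sets; additivity of matching numbers under disjoint unions gives $\nu(G) = (m - s + 1) + (s - 1) = m$.

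To compute $g_{I(G)}$, write $G_0 = H$ and $G_t = G_{t-1} \sqcup K_2$ for $1 \le t \le s - 1$, so that passing from $I(G_{t-1})$ to $I(G_t)$ amounts to adjoining an edge on two fresh variables. Lemma \ref{Lemma:adjoiningedge} then yields the recursion
\[
g_{I(G_t)}(k) \; = \; \min\bigl\{\, g_{I(G_{t-1})}(k - 1),\ g_{I(G_{t-1})}(k) + 1 \,\bigr\}, \qquad 1 \le k \le \nu(G_t),
\]
with the conventions $g(0) = g(\nu + 1) = +\infty$. A short induction on $t$ (the step is a routine case analysis on which term attains the minimum, mirroring the argument in the proof of Corollary \ref{Cor:gIgJincreasing}) then shows
\[
g_{I(G_t)}(k) \; = \; \max\{\, t + 2 - k,\ 0 \,\} \qquad \text{for } 1 \le k \le \nu(G_t).
\]
Setting $t = s - 1$ gives $g_{I(G)}(k) = \max\{\, s + 1 - k,\ 0 \,\}$ for $1 \le k \le m$. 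This is strictly positive (indeed equal to $s - (k-1)$) precisely for $k = 1, \ldots, s$ and vanishes precisely for $k = s + 1, \ldots, m$, which is the required conclusion.

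The only non-mechanical ingredient is producing a cochordal $H$ of arbitrary prescribed matching number while preserving the hypotheses of Theorem \ref{Thm:gIchordal}; the family $\{P_{2N}^c : N \ge 2\}$ handles this uniformly. Everything else is a clean combination of Theorem \ref{Thm:gIchordal} with iterated applications of Lemma \ref{Lemma:adjoiningedge}, the latter playing exactly the role that Proposition \ref{Prop:(I,x)sqfrPowers} plays in the theory of adjoining a variable, except that here $d_k = 2k$ for the edge ideals involved, so the recursion simplifies to the clean form above.
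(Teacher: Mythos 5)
Your construction coincides with the paper's: the paper likewise takes a cochordal graph $G_0$ with $G_0^c$ connected, chordal and possessing a cut vertex and with $\nu(G_0)=m-s+1$, adjoins $s-1$ pairwise disjoint new edges, and arrives at the same recursion and the same closed form $g_{I(G)}(k)=s-(k-1)$ for $k\le s$ and $g_{I(G)}(k)=0$ for $k\ge s+1$. Your choice $H=P_{2(m-s+1)}^c$ is a legitimate witness for Theorem \ref{Thm:gIchordal}, and the induction giving $g_{I(G_t)}(k)=\max\{t+2-k,0\}$ is arithmetically correct \emph{granted} the recursion at every stage.

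The gap is in iterating Lemma \ref{Lemma:adjoiningedge}. That lemma assumes the base graph is cochordal, and this hypothesis is exactly what makes $I(G)^{[k]}=I(H)^{[k]}+x_{n+1}x_{n+2}I(H)^{[k-1]}$ a Betti splitting: Bolognini's criterion (Proposition \ref{Prop:Bol}) requires both summands to be componentwise linear, which is supplied by Proposition \ref{Prop:GFrobergCochordal}(b). After the first step the hypothesis fails: $G_1=H\sqcup K_2$ is \emph{not} cochordal, because $(H\sqcup K_2)^c$ is the join of $H^c$ with two isolated vertices, so any non-edge $\{a,b\}$ of $H^c$ (one exists since $H$ has an edge) together with the two new vertices forms an induced $4$-cycle. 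More generally $H\sqcup tK_2$ is never cochordal for $t\ge 1$. Hence for $s\ge 3$ your second and subsequent invocations of Lemma \ref{Lemma:adjoiningedge} are outside its stated hypotheses, and the recursion $g_{I(G_t)}(k)=\min\{g_{I(G_{t-1})}(k-1),\,g_{I(G_{t-1})}(k)+1\}$ is unproven for $t\ge 2$. This is precisely where the paper does additional work: it argues, by induction on the number of adjoined edges, that the relevant squarefree powers of the intermediate ideals retain linear resolutions, so that the Betti splitting, and with it the projective-dimension formula and the recursion, persists at every stage. Your write-up needs that verification (or some substitute, such as the general Betti-splitting results for squarefree powers in \cite{CFL} alluded to in the remark at the end of Section 2) before the iteration is legitimate; without it, only the case of a single adjoined edge, i.e.\ $s\le 2$, is actually covered.
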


For the proof of this theorem, we need the following lemma which is a variation of Proposition \ref{Prop:(I,x)sqfrPowers}.
\begin{lemma}\label{Lemma:adjoiningedge}
	Let $H$ be a cochordal graph on vertex set $[n]$ and let $G$ be the graph on vertex set $[n+2]$ whose edge set is $E(H)\cup\{\{n+1,n+2\}\}$. Then, $\nu(I(G))=\nu(I(H))+1$ and for all $k=1,\dots,\nu(I(G))$,
	$$
	g_{I(G)}(k)=\min\{g_{I(H)}(k)+1,g_{I(H)}(k-1)\},
	$$
	where we set $g_{I(H)}(0)=g_{I(H)}(\nu(I(G))+1)=+\infty$.
\end{lemma}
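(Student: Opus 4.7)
The approach closely mirrors the proof of Proposition \ref{Prop:(I,x)sqfrPowers}, with the squarefree monomial $x_{n+1}x_{n+2}$ playing the role of the single adjoined variable $x$. Since the new edge $\{n+1,n+2\}$ is disjoint from every edge of $H$, each $k$-matching of $G$ is either a $k$-matching of $H$ or consists of $\{n+1,n+2\}$ together with a $(k-1)$-matching of $H$. This yields $\nu(I(G))=\nu(I(H))+1$ together with the decomposition
\[
I(G)^{[k]}\;=\;I(H)^{[k]}S\;+\;x_{n+1}x_{n+2}\,I(H)^{[k-1]}S
\]
for $1\le k\le\nu(I(G))$, with the convention $I(H)^{[0]}=S'$.

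For $1\le k\le\nu(I(G))-1$, the plan is to show that this decomposition is a Betti splitting. Both summands are componentwise linear: $I(H)^{[j]}$ has linear quotients for every admissible $j$ by Proposition \ref{Prop:GFrobergCochordal}(b), and multiplication by the squarefree monomial $x_{n+1}x_{n+2}$ transports a linear-quotient ordering of $I(H)^{[k-1]}$ to one of $x_{n+1}x_{n+2}I(H)^{[k-1]}$. Since no minimal generator of $I(H)^{[k]}$ involves $x_{n+1}$ or $x_{n+2}$, the two generating sets are disjoint and Proposition \ref{Prop:Bol} applies. As in the proof of Proposition \ref{Prop:(I,x)sqfrPowers}, the intersection computes as $I(H)^{[k]}\cap x_{n+1}x_{n+2}I(H)^{[k-1]}=x_{n+1}x_{n+2}I(H)^{[k]}$, using $I(H)^{[k]}\subseteq I(H)^{[k-1]}$ together with the fact that neither $x_{n+1}$ nor $x_{n+2}$ divides any generator of $I(H)^{[k]}$.

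The minimal free $S'$-resolution of $I(H)^{[j]}$, tensored with $S$ and shifted by $x_{n+1}x_{n+2}$, still resolves $x_{n+1}x_{n+2}I(H)^{[j]}$, so $\pd_S(S/x_{n+1}x_{n+2}I(H)^{[j]})=\pd_{S'}(S'/I(H)^{[j]})$ and likewise $\pd_S(S/I(H)^{[k]}S)=\pd_{S'}(S'/I(H)^{[k]})$. Substituting into the Betti-splitting identity (\ref{eq:pdBS}) and converting to depth via Auslander--Buchsbaum, while noting $\dim S=\dim S'+2$, produces
\[
\depth_S(S/I(G)^{[k]})\;=\;\min\{\depth_{S'}(S'/I(H)^{[k]})+1,\;\depth_{S'}(S'/I(H)^{[k-1]})+2\}.
\]
Because $\indeg(I(H)^{[j]})=2j$ and $\indeg(I(G)^{[k]})=2k$ for edge ideals, subtracting $2k-1$ from both sides and using $\depth_{S'}(S'/I(H)^{[k-1]})=g_{I(H)}(k-1)+2k-3$ yields the claimed identity $g_{I(G)}(k)=\min\{g_{I(H)}(k)+1,\,g_{I(H)}(k-1)\}$. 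Finally, when $k=\nu(I(G))$ the first summand in the decomposition vanishes, so $I(G)^{[k]}=x_{n+1}x_{n+2}I(H)^{[\nu(I(H))]}$; the same $\pd$-identity then gives $g_{I(G)}(\nu(I(G)))=g_{I(H)}(\nu(I(H)))$, in agreement with the formula under the infinity convention. The principal bookkeeping obstacle is the careful tracking of the two extra variables when translating between $S'$ and $S$; once that is set up, the argument reduces cleanly to the Betti-splitting computation familiar from Proposition \ref{Prop:(I,x)sqfrPowers}.
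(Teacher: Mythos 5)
Your proposal is correct and follows essentially the same route as the paper's own proof: the decomposition $I(G)^{[k]}=I(H)^{[k]}+x_{n+1}x_{n+2}I(H)^{[k-1]}$, the Betti splitting via Bolognini's criterion, the intersection $x_{n+1}x_{n+2}I(H)^{[k]}$, and the passage through formula (\ref{eq:pdBS}) and Auslander--Buchsbaum with the degree bookkeeping $d_k=2k$. The only cosmetic difference is that you fold $k=1$ into the general Betti-splitting case via $I(H)^{[0]}=S'$ while the paper checks the boundary cases $k=1$ and $k=\nu(I(H))+1$ directly; both are fine.
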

\begin{proof}
	The proof is very similar to that of Proposition \ref{Prop:(I,x)sqfrPowers}. We include a sketch.
	
	Our formula is easily verified for $k=1$ and $k=\nu(I(H))+1$. Let $2\le k\le\nu(I(H))$, then $I(G)^{[k]}=I(H)^{[k]}+x_{n+1}x_{n+2}I(H)^{[k-1]}$ is a Betti splitting, as both ideals $I(H)^{[k]}$ and $x_{n+1}x_{n+2}I(H)^{[k]}$ have linear resolutions, see Propositions \ref{Prop:GFrobergCochordal}(b) and \ref{Prop:Bol}. Since $I(H)^{[k]}\cap x_{n+1}x_{n+2}I(H)^{[k-1]}=x_{n+1}x_{n+2}I(H)^{[k]}$, by formula (\ref{eq:pdBS}) we have
	$$
	\pd(I(G)^{[k]})=\max\{\pd(I(H)^{[k]})+1,\pd(I(H)^{[k-1]})\}.
	$$
	Let $S=K[x_1,\dots,x_{n+2}]$ and $R=K[x_1,\dots,x_n]$. Then
	$$
	\depth(S/I(G)^{[k]})=\min\{\depth(R/I(H)^{[k]})+1,\depth(R/I(H)^{[k-1]})+2\}.
	$$
	Finally adding $-(2k-1)$ to both sides of the previous equation we obtain $g_{I(G)}(k)=\min\{g_{I(H)}(k)+1,g_{I(H)}(k-1)\}$, as desired.
\end{proof}
Using the exact same argument of the proof of Corollary \ref{Cor:gIgJincreasing} we get
\begin{corollary}
	Under the assumptions and notation of the previous lemma, if $g_{I(H)}$ is non-increasing, then $g_{I(G)}$ is non-increasing, too.
\end{corollary}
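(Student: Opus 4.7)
The plan is to follow essentially verbatim the case-analysis argument used in the proof of Corollary \ref{Cor:gIgJincreasing}, but with the simpler formula supplied by Lemma \ref{Lemma:adjoiningedge}. Write $\nu = \nu(I(H))$, so that $\nu(I(G)) = \nu+1$, and use the shorthand $h(k) = g_{I(H)}(k)$ with the extension $h(0) = h(\nu+1) = +\infty$. The lemma then reads
\[
g_{I(G)}(k) \;=\; \min\{\,h(k)+1,\ h(k-1)\,\}, \qquad k = 1,\dots,\nu+1,
\]
and the goal is to verify $g_{I(G)}(k+1) - g_{I(G)}(k) \le 0$ for every $k=1,\dots,\nu$.

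First I would dispose of the two boundary values of $k$. At $k=1$ the formula collapses to $g_{I(G)}(1) = h(1)+1$, and then $g_{I(G)}(2) \le h(1) \le h(1)+1 = g_{I(G)}(1)$ directly from the definition of minimum. At $k=\nu$ the top value is $g_{I(G)}(\nu+1) = h(\nu)$, while $g_{I(G)}(\nu) = \min\{h(\nu)+1,\, h(\nu-1)\}$; both branches are $\ge h(\nu)$, the first trivially and the second by the non-increasingness hypothesis $h(\nu-1) \ge h(\nu)$. Hence $g_{I(G)}(\nu+1) \le g_{I(G)}(\nu)$.

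For a generic interior $k \in \{2,\dots,\nu-1\}$ I would then run the four-case split according to which branch realizes each of the two minima. In the case where both minima pick the $h(\cdot)+1$ branch, the difference is $h(k+1)-h(k) \le 0$. In the case where both pick the $h(\cdot-1)$ branch, the difference is $h(k)-h(k-1) \le 0$. The case $g_{I(G)}(k+1)=h(k)$, $g_{I(G)}(k)=h(k)+1$ gives difference $-1$. The only slightly delicate case is $g_{I(G)}(k+1)=h(k+1)+1$ with $g_{I(G)}(k)=h(k-1)$: here I would note that $h(k+1)+1 \le h(k)$ (since the min at step $k+1$ picked the first branch), and then chain $g_{I(G)}(k+1) \le h(k) \le h(k-1) = g_{I(G)}(k)$ using non-increasingness of $h$.

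The argument is essentially mechanical once the four-case table is set up. The only place requiring real attention is keeping the bookkeeping consistent at the two boundary values of $k$; this is handled cleanly by the $+\infty$ conventions on $h(0)$ and $h(\nu+1)$, which automatically force the minimum to pick the finite branch there and reduce the boundary checks to the single inequalities sketched above. No new ideas beyond those of Corollary \ref{Cor:gIgJincreasing} are needed, which is why the statement in the paper simply cites that proof.
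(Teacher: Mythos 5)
Your proof is correct and is exactly what the paper intends: it states this corollary by remarking that one uses ``the exact same argument of the proof of Corollary \ref{Cor:gIgJincreasing}'', which is the four-case analysis you carry out, here simplified because $d_k-d_{k-1}-1=1$ for edge ideals. The boundary cases and all four interior cases in your write-up check out.
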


Now we are in the position to prove our second main result.
\begin{proof}[Proof of Theorem \ref{Thm:s<m}]
	For $s=1$ we can pick any graph $G$ with matching number $\nu(I(G))=m$ whose complementary graph satisfies condition (i) of Theorem \ref{Thm:gIchordal}. Then $g_{I(G)}(1)=1$ and $g_{I(G)}(k)=0$ for all $k=2,\dots,m$.
	
	Now, let $s>1$ and set $\ell=s-1$. Set $y_i=x_{n+2i-1}x_{n+2i}$, $i=1,\dots,\ell$. Let $G_0$ be any graph on vertex set $[n]$, $n$ big enough, whose complementary graph satisfies condition (iii) of Theorem \ref{Thm:gIchordal}, and with matching number $\nu(I(G_0))=m-\ell$. Let $R=K[x_1,\dots,x_n,y_1,\dots,y_\ell]$ and $J=(I(G_0),y_1,y_2,\dots,y_\ell)$. Then $\nu(J)=m$. We claim that $J^{[k]}$ has a linear resolution, $k=1,\dots,m$. For $\ell=1$, $J^{[k]}=I(G_0)^{[k]}+y_1I(G_0)^{[k-1]}$ is a Betti splitting, because $I(G_0)^{[k]}$, $y_1I(G_0)^{[k-1]}$ have linear resolutions. Note that $I(G_0)^{[k]}\cap y_1I(G_0)^{[k-1]}=y_1I(G_0)^{[k]}$ has again a linear resolution and it is equigenerated in degree $2k+1$. Since $J^{[k]}$ is equigenerated in degree $2k$, applying formula (\ref{eq:BettiNumberBS}) in our situation, we see that $J^{[k]}$ has again a linear resolution. For $\ell>1$, we set $L=(I(G_0),y_1,\dots,y_{\ell-1})$. Then $J=(L,y_\ell)$. By induction $L^{[k]}$ has a linear resolution, $k=1,\dots,\nu(L)$. Thus repeating the same argument as in the case $\ell=1$, it follows that $J^{[k]}$ has a linear resolution, for all $k=1,\dots,\nu(J)$.
	
	Let $S=K[x_1,\dots,x_n,x_{n+1},\dots,x_{n+2\ell}]$. Let $G$ be the graph on vertex set $[n+2\ell]$, $\ell=s-1$, whose edge set is
	$$
	E(G_0)\cup\big\{\{n+1,n+2\},\{n+3,n+4\},\dots,\{n+2s-3,n+2s-2\}\big\}.
	$$
	Note that $\nu(I(G))=m$. We claim that $g_{I(G)}(k)=s-(k-1)$ for $k=1,\dots,s$ and $g_{I(G)}(k)=0$ for $k=s+1,\dots,m$. This will conclude our proof.
	
	For $s=2$, our claim follows from Lemma \ref{Lemma:adjoiningedge}.
	Let $s>2$, $L=(I(G_0),y_1,\dots,y_{s-2})$ and $G'=G-\{n+2s-3,n+2s-2\}$. Note that $\pd(S/I(G)^{[k]})=\pd(R/J^{[k]})$. Since $J^{[k]}=L^{[k]}+y_\ell L^{[k-1]}$ is a Betti splitting, formula (\ref{eq:pdBS}) yields
	$$
	\pd(R/J^{[k]})=\max\big\{\pd(R/L^{[k]})+1,\pd(R/L^{[k-1]})\big\}.
	$$
	Thus
	$$
	\pd(S/I(G)^{[k]})=\max\big\{\pd(S/I(G')^{[k]})+1,\pd(S/I(G')^{[k-1]})\big\}.
	$$
	Arguing as in Proposition \ref{Prop:(I,x)sqfrPowers}, we have
	$$
	g_{I(G)}(k)=\min\big\{g_{I(G')}(k)+1,g_{I(G')}(k-1)\big\}.
	$$
	By induction on $s$, we may assume that $g_{I(G')}(k)=(s-1)-(k-1)$ for $k=1,\dots,s$ and $g_{I(G')}(k)=0$ for $k=s,\dots,m-1$. An easy calculation shows that $g_{I(G)}(k)=s-(k-1)$ for $k=1,\dots,s$ and $g_{I(G)}(k)=0$ for $k=s+1,\dots,m$, as desired.
\end{proof}

\section{Construction of non-increasing normalized depth functions}

We conclude this article with the following result which shows that any non-increasing function can be the normalized depth function of a suitable squarefree monomial ideal.
\begin{theorem}\label{Thm:AnyNon-Increasing}
	Let $a_1\ge a_2\ge\dots\ge a_m$ be a non-increasing sequence of non-negative integers. Then, there exists a squarefree monomial ideal $I\subset S=K[x_1,\dots,x_n]$, $n$ large enough, such that $\nu(I)=m$ and $g_{I}(k)=a_k$ for $k=1,\dots,m$.
\end{theorem}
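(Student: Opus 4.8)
The plan is to exploit the additivity of the normalized depth function under products in disjoint variables (Theorem \ref{Thm:gproduct}) in order to reduce the statement to realizing the extreme rays of the cone of non-increasing non-negative sequences. For $1\le j\le m$ let $\sigma_j$ denote the step function with $\sigma_j(k)=1$ for $k\le j$ and $\sigma_j(k)=0$ for $j<k\le m$. Setting $a_{m+1}=0$ and $b_j=a_j-a_{j+1}\ge0$ (these are non-negative precisely because $a$ is non-increasing), one has the telescoping identity $a_k=\sum_{j=1}^m b_j\sigma_j(k)$. Thus, if for every $j$ I can produce a squarefree monomial ideal $I_j$, in its own set of variables, with $\nu(I_j)=m$ and $g_{I_j}=\sigma_j$, then taking the product of $b_j$ copies of $I_j$ for each $j$ (all in pairwise disjoint variables) and invoking Theorem \ref{Thm:gproduct} repeatedly yields an ideal $I$ with $\nu(I)=m$ and $g_I=\sum_j b_j\sigma_j=a$. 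The degenerate case $a\equiv0$ is handled separately by any ideal with $\nu=m$ and vanishing normalized depth function, for instance a squarefree Veronese ideal.

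Everything therefore reduces to realizing the single step functions $\sigma_j$ with the prescribed large matching number $\nu=m$. For $j=1$ this is already available: Theorem \ref{Thm:gIchordal}, or Theorem \ref{Thm:s<m} with $s=1$, provides a cochordal graph $G$ with $\nu(G)=m$ and $g_{I(G)}=\sigma_1$. More generally, the convex profiles are realized by the functions $\sum_s c_s T_s$ with $T_s(k)=\max\{s+1-k,0\}$ and $c_s\ge0$, each $T_s$ with $\nu=m$ being furnished by Theorem \ref{Thm:s<m}; combining these via Theorem \ref{Thm:gproduct} already realizes every non-increasing sequence whose successive differences $a_k-a_{k+1}$ (with $a_{m+1}=0$) are themselves non-increasing.

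The essential new difficulty is therefore the non-convex step functions $\sigma_j$ with $2\le j\le m$, and I expect this to be the main obstacle, for two reasons. First, all shaping operations developed in Sections 2 and 3 — products (Theorem \ref{Thm:gproduct}), adjoining a variable (Proposition \ref{Prop:(I,x)sqfrPowers}), and adjoining an edge (Lemma \ref{Lemma:adjoiningedge}) — preserve convexity of the profile: the increment $d_k-d_{k-1}\ge2$ forces the transformation $g\mapsto\min\{g(k)+d_k-d_{k-1}-1,\,g(k-1)\}$ to raise the value at $k=1$ and never to open a plateau, so no iteration of these can produce a non-convex $\sigma_j$. Second, within cochordal graphs Theorem \ref{Thm:gIchordal} shows that $g_{I(G)}(1)=1$ already forces $g_{I(G)}(k)=0$ for all $k\ge2$, so $\sigma_j$ with $j\ge2$ cannot be the normalized depth function of a cochordal edge ideal at all. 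This matches the remark that realizing arbitrary functions by \emph{edge} ideals is open, and signals that the $\sigma_j$ must be constructed from genuine (non-edge) squarefree monomial ideals.

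To construct such an $I_j$ I would look for a squarefree monomial ideal, or the edge ideal of a carefully chosen non-cochordal graph $G$, exhibiting a \emph{domination threshold} at $j+1$: arranged so that $G$ admits no dominating $k$-matching for $k\le j$, while for $j<k\le m=\nu(G)$ there is a dominating $k$-matching satisfying conditions (a)--(b) of Proposition \ref{Criterion:g=0}. The latter gives $g_I(k)=0$ for $k>j$ exactly as in the proof of Theorem \ref{Thm:gIchordal}, via a special-matching type exchange argument. On the plateau $k\le j$ the absence of any dominating $k$-matching already forces $g_I(k)\ge1$; the delicate point is to show $g_I(k)=1$ \emph{exactly}, which I would verify through Hochster's formula by pinning down that the surviving reduced cohomology in the relevant degree of the Stanley--Reisner complex of $I^{[k]}$ is one-dimensional, arising from a single connectivity defect that persists up to level $j$. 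Producing a family whose homology is this tightly controlled at every level $k\le j$ while keeping $\nu=m$ is where I expect the real work to lie; once the atoms $\sigma_j$ are in hand, the product construction of the first paragraph assembles an arbitrary non-increasing sequence with no further difficulty.
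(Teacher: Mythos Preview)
Your high-level strategy --- decompose the non-increasing sequence as $a_k=\sum_j b_j\sigma_j(k)$ with $b_j=a_j-a_{j+1}\ge0$, realize each step function $\sigma_j$ by some squarefree monomial ideal with $\nu=m$, and then assemble the answer as a product in disjoint variables via Theorem \ref{Thm:gproduct} --- is exactly the approach the paper takes. The difficulty you identify, realizing $\sigma_j$ for $j\ge2$, is also the right one. But your analysis of that difficulty contains a concrete error, and the speculative programme you outline (domination thresholds in non-cochordal graphs, Hochster computations) is unnecessary.

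The mistake is the assertion that ``the increment $d_k-d_{k-1}\ge2$'' forces the operation of Proposition \ref{Prop:(I,x)sqfrPowers} to preserve convexity and hence cannot yield $\sigma_j$. That inequality holds for an \emph{edge} ideal, where $d_k=2k$; but after adjoining one variable to $I(G)$ the new ideal $J=(I(G),x)$ has $\indeg(J)=1$, and the formula $\indeg(J^{[k]})=d_{k-1}+1$ from the proof of Proposition \ref{Prop:(I,x)sqfrPowers} shows that the initial degrees of $J$ are $1,3,5,\dots$, so in particular the first increment equals $1-0=1$ at the next iteration. Iterating, the increments shift and the plateau widens by exactly one step each time. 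Concretely, starting from a cochordal $G$ with $G^c$ disconnected and $\nu(G)=m-s$ (so $g_{I(G)}\equiv0$) and adjoining $s$ variables one at a time, Proposition \ref{Prop:(I,x)sqfrPowers} gives successively
\[
(1,0,\dots,0),\quad(1,1,0,\dots,0),\quad\dots,\quad(\underbrace{1,\dots,1}_{s},0,\dots,0),
\]
each with monomial grade increased by one, ending at $\nu=m$. This is precisely Lemma \ref{Lem:AnyNon-Increasing1}, and it furnishes $\sigma_s$ for every $1\le s<m$. Your objection from Theorem \ref{Thm:gIchordal} does not apply: the ideals $(I(G),x_{n+1},\dots,x_{n+s})$ are not edge ideals.

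The remaining atom, the constant function $\sigma_m=(1,\dots,1)$, is obtained in the paper (Lemma \ref{Lem:AnyNon-Increasing2}) by a short trick you did not anticipate: take $L$ with $\nu(L)=m+1$ and $g_L=(\underbrace{1,\dots,1}_{m},0)$ from Lemma \ref{Lem:AnyNon-Increasing1}, and multiply by an edge ideal $I(G)$ in fresh variables with $\nu(G)=m$ and $g_{I(G)}\equiv0$. By Theorem \ref{Thm:gproduct} the product has $\nu=\min\{m+1,m\}=m$ and $g=g_L+0=(1,\dots,1)$ on $\{1,\dots,m\}$. With all $\sigma_j$ in hand, your first paragraph finishes the proof.
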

For the proof of this result we need the following lemmata.
\begin{lemma}\label{Lem:AnyNon-Increasing1}
	Given  positive integers $s<m$, there exists a squarefree monomial ideal $I\subset S=K[x_1,\dots,x_n]$, $n$ large enough, such that $\nu(I)=m$, $g_I(k)=1$ for $k=1,\dots,s$ and $g_I(k)=0$ for $k=s+1,\dots,m$.
\end{lemma}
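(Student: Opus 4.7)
The plan is to construct $I$ by starting from the edge ideal of a complete bipartite graph (which has normalized depth function identically zero) and then iteratively adjoining $s$ fresh variables via Proposition \ref{Prop:(I,x)sqfrPowers}. Each such adjunction will extend the initial plateau of values equal to $1$ by one position while preserving the vanishing tail.

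Set $n = m - s \geq 1$ and let $G_0 = K_{n,n}$ be the complete bipartite graph on the bipartition $\{x_1,\ldots,x_n\} \sqcup \{y_1,\ldots,y_n\}$. Then $G_0$ is cochordal (since $G_0^c = K_n \sqcup K_n$ is chordal), $\nu(G_0) = n$, and $I(G_0)^{[k]} = J_k^{\mathbf{x}} \cdot J_k^{\mathbf{y}}$, where $J_k^{\mathbf{x}}$ and $J_k^{\mathbf{y}}$ are the squarefree Veronese ideals of degree $k$ in the $x$- and $y$-variables, respectively, living in disjoint polynomial rings. Since $K[\mathbf{x}]/J_k^{\mathbf{x}}$ is the Stanley--Reisner ring of the $(k-2)$-skeleton of the $(n-1)$-simplex, which is Cohen--Macaulay of dimension $k-1$, one has $\pd(K[\mathbf{x}]/J_k^{\mathbf{x}}) = n - k + 1$. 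Applying \cite[Corollary 3.2]{HRR22} (the same result used in the proof of Theorem \ref{Thm:gproduct}) gives $\depth(S/I(G_0)^{[k]}) = 2k - 1 = d_k(I(G_0)) - 1$, hence $g_{I(G_0)}(k) = 0$ for every $1 \leq k \leq n$.

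Introduce new variables $z_1,\ldots,z_s$ and set $I_0 := I(G_0)$ and $I_\ell := (I_{\ell-1}, z_\ell)$ for $\ell = 1,\ldots,s$. Since $I_\ell^{[k]} = I_{\ell-1}^{[k]} + z_\ell I_{\ell-1}^{[k-1]}$, we have $\nu(I_\ell) = n + \ell$ and $d_k(I_\ell) = \min\{d_k(I_{\ell-1}), d_{k-1}(I_{\ell-1}) + 1\}$; either a short induction on $\ell$ or a direct inspection of the generators of $I_\ell^{[k]}$ gives
$$
d_k(I_\ell) \;=\; k \;\text{ for }\; 1 \leq k \leq \ell, \qquad d_k(I_\ell) \;=\; 2k - \ell \;\text{ for }\; \ell < k \leq n + \ell.
$$
In particular, $d_k(I_{\ell-1}) - d_{k-1}(I_{\ell-1}) - 1$ equals $0$ for $k \leq \ell - 1$ and $1$ for $k \geq \ell$. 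Applying Proposition \ref{Prop:(I,x)sqfrPowers} to $I = I_{\ell-1}$ and $x = z_\ell$ (its componentwise linearity hypothesis being unnecessary in view of the Remark after it), a short case analysis on $k$ then yields, by induction on $\ell$,
$$
g_{I_\ell}(k) \;=\; 1 \;\text{ for }\; 1 \leq k \leq \ell, \qquad g_{I_\ell}(k) \;=\; 0 \;\text{ for }\; \ell < k \leq n + \ell.
$$
Setting $I := I_s$ produces the desired squarefree monomial ideal with $\nu(I) = m$ and the required normalized depth function. The only non-routine ingredient is the computation of $g_{I(G_0)}$; the rest is bookkeeping through iterated application of Proposition \ref{Prop:(I,x)sqfrPowers}, with the boundary cases $k = 1$ and $k = \nu(I_{\ell-1}) + 1$ handled uniformly by the conventions $g_I(0) = g_I(\nu(I) + 1) = +\infty$ and $d_0 = 0$.
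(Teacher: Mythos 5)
Your proposal is correct and follows essentially the same route as the paper: start from the edge ideal of a graph whose complement is disconnected and chordal (so that the normalized depth function vanishes identically) and then adjoin $s$ fresh variables one at a time, tracking the effect via Proposition \ref{Prop:(I,x)sqfrPowers}. The only cosmetic difference is that you fix $G_0=K_{n,n}$ and compute $g_{I(G_0)}\equiv 0$ by hand through the Veronese product structure and \cite[Corollary 3.2]{HRR22}, where the paper simply cites \cite[Corollary 2.6]{EHHM2022b}, and you carry out the $d_k$ bookkeeping explicitly; both steps check out.
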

\begin{proof}
	Let $G$ be any graph on $[n]$, $n$ large enough, with matching number $\nu(G)=m-s$ such that $G^c$ is disconnected and chordal. Let $S=K[x_1,\dots,x_n,x_{n+1},\dots,x_{n+s}]$ and $I=(I(G),x_{n+1},\dots,x_{n+s})$. We claim that $\nu(I)=m$, $g_I(k)=1$ for $k=1,\dots,s$ and $g_I(k)=0$ for $k=s+1,\dots,m$. This will conclude our proof.
	
	By \cite[Corollary 2.6]{EHHM2022b}, $g_{I(G)}(k)=0$ for $k=1,\dots,m-s$. We prove our statement by induction on $s$. For $s=1$, since $\indeg(I(G))=2$ and $I(G)^{[k]}$ has a linear resolution for all $k=1,\dots,m-s$, by Proposition \ref{Prop:(I,x)sqfrPowers} we get that $\nu((I(G),x_{n+1}))=(m-s)+1$, $g_{(I(G),x_{n+1})}(1)=1$ and $g_{(I(G),x_{n+1})}(k)=0$ for $k=2,\dots,(m-s)+1$. Suppose our claim is true up to $s-1$ and set $L=(I(G),x_{n+1},\dots,x_{n+(s-1)})$. Then $L^{[k]}$ has a linear resolution, for $k=1,\dots,m-1$, and $I=(L,x_{n+s})$. Moreover, $\indeg(L)=1$, $\nu(L)=(m-s)+(s-1)=m-1$, $g_{L}(k)=1$ for $k=1,\dots,s-1$ and $g_{L}(k)=0$ for $k=s,\dots,m-1$. Applying again Proposition \ref{Prop:(I,x)sqfrPowers}, our statement follows.
\end{proof}
\begin{lemma}\label{Lem:AnyNon-Increasing2}
	Given a positive integer $m$, there exists a squarefree monomial ideal $I\subset S=K[x_1,\dots,x_n]$, $n$ large enough, such that $\nu(I)=m$ and $g_I(k)=1$ for $k=1,\dots,m$.
\end{lemma}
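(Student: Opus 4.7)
The plan is to combine Lemma \ref{Lem:AnyNon-Increasing1}, applied at a shifted matching number, with the additivity of the normalized depth function under products in disjoint sets of variables (Theorem \ref{Thm:gproduct}). The direct obstruction is that Lemma \ref{Lem:AnyNon-Increasing1} delivers only step functions with $s<m$, so the terminal value $g_I(m)=1$ cannot be arranged together with $\nu(I)=m$ from that lemma alone. I would sidestep this by first producing an ideal of matching number $m+1$ whose normalized depth function equals $1$ on $\{1,\dots,m\}$, and then truncating the matching number down to $m$ by taking a product with a suitable disjoint ideal.

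Concretely, first apply Lemma \ref{Lem:AnyNon-Increasing1} with $m$ replaced by $m+1$ and $s=m$: this yields a squarefree monomial ideal $J\subset R=K[x_1,\dots,x_N]$, $N$ large enough, such that $\nu(J)=m+1$, $g_J(k)=1$ for $k=1,\dots,m$, and $g_J(m+1)=0$. Next, in a polynomial ring $T=K[y_1,\dots,y_m]$ in a fresh set of variables, take $L=(y_1,\dots,y_m)$. Then $\nu(L)=m$, and $L^{[k]}$ is generated by all squarefree monomials of degree $k$ in the $y_i$'s, i.e.\ it is the Stanley--Reisner ideal of the $(k-2)$-skeleton of the simplex on $[m]$. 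Since such skeletons are Cohen--Macaulay of Krull dimension $k-1$, one has $\depth(T/L^{[k]})=k-1$; together with $\indeg(L^{[k]})=k$ this forces $g_L(k)=0$ for $1\le k\le m$.

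Finally, let $I=J\cdot L\subset S=K[x_1,\dots,x_N,y_1,\dots,y_m]$. Because $J$ and $L$ live in disjoint sets of variables, Theorem \ref{Thm:gproduct} yields $\nu(I)=\min\{m+1,m\}=m$ and $g_I(k)=g_J(k)+g_L(k)=1+0=1$ for every $k=1,\dots,m$, as required. The only ingredient not cited verbatim from earlier in the paper is the computation that $g_L\equiv 0$; this is classical (skeletons of a simplex are Cohen--Macaulay), and in any case $L$ may be replaced by any squarefree monomial ideal in disjoint variables with $g\equiv 0$ and $\nu=m$, for instance the edge ideal of the complete graph $K_{2m}$. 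Beyond assembling this combination correctly I anticipate no serious technical obstacle.
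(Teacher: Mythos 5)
Your proof is correct and follows essentially the same route as the paper: both apply Lemma \ref{Lem:AnyNon-Increasing1} at matching number $m+1$ with $s=m$ and then invoke Theorem \ref{Thm:gproduct} to multiply by a disjoint-variable squarefree ideal with monomial grade $m$ and identically zero normalized depth function. The only (harmless) difference is the choice of that second factor --- the paper takes the edge ideal of a graph whose complement is disconnected, citing an external corollary, whereas you take $(y_1,\dots,y_m)$ and verify $g\equiv 0$ directly via the Cohen--Macaulayness of skeletons of the simplex, which is a valid self-contained substitute.
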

\begin{proof}
	By the previous lemma, we can find $L\subset K[x_1,\dots,x_n]$, $n$ large enough, such that $\nu(L)=m+1$, $g_{L}(k)=1$ for $k=1,\dots,m$, $g_{L}(m+1)=0$. Let $G$ be any graph on $[p]$ with matching $\nu(G)=m$ such that $G^c$ is disconnected. Let $I(G)=(y_iy_j:\{i,j\}\in E(G))$ be its edge ideal and $S=K[x_1,\dots,x_n,y_1,\dots,y_p]$. We claim that $I=L\cdot I(G)$ verifies $\nu(I)=m$ and $g_I(k)=1$ for $k=1,\dots,m$. This follows at once by Theorem \ref{Thm:gproduct} and \cite[Corollary 2.6]{EHHM2022b}.
\end{proof}\smallskip

Finally, we are in the position to prove our last result.

\begin{proof}[Proof of Theorem \ref{Thm:AnyNon-Increasing}]
	Any vector $(a_1,a_2,\dots,a_m)$, with $a_1\ge a_2\ge\dots\ge a_m\ge0$ integers, can be written uniquely as a sum of vectors of type $(1,1,\dots,1,0,0,\dots,0)$ and type $(1,1,\dots,1)$. Combining Theorem \ref{Thm:gproduct} with Lemmata \ref{Lem:AnyNon-Increasing1}, \ref{Lem:AnyNon-Increasing2}, the result follows.
\end{proof}

\subsection*{Acknowledgements}

We thank the referees for their helpful suggestions that improved the quality of the article. This article was written while the first author was visiting Department of Mathematics of University Duisburg-Essen, Germany. He would like to thank Professor Herzog for his support and hospitality. The third author was partially supported by JSPS KAKENHI 19H00637.

{\small\bibliographystyle{plain}

\textsc{Antonino Ficarra, Department of mathematics and computer sciences, physics and earth sciences, University of Messina, Viale Ferdinando Stagno d'Alcontres 31, 98166 Messina, Italy}

\textit{Email address:} \url{antficarra@unime.it}\smallskip\medskip

\textsc{J\"urgen Herzog, Fakult\"at f\"ur Mathematik, Universit\"at Duisburg-Essen, 45117 Essen, Germany}

\textit{Email address:} \url{juergen.herzog@uni-essen.de}\smallskip\medskip

\textsc{Takayuki Hibi, Department of Pure and Applied Mathematics, Graduate School of Information Science and Technology, Osaka University, Suita, Osaka 565--0871, Japan}

\textit{Email address:} \url{hibi@math.sci.osaka-u.ac.jp}
}

\end{document}